 \newtheorem{thm}{Theorem}
 \newtheorem{cor}{Corollary}
 \newtheorem{prop}{Proposition}
 \newtheorem{lem}{Lemma}
  \newtheorem{claim}{Claim}
\theoremstyle{definition}
 \newtheorem{rem}{Remark}}
\theoremstyle{definition}
  \newtheorem{defn}{Definition}}
\theoremstyle{definition}
 \newtheorem{exam}{Example}}
\theoremstyle{definition}
\theoremstyle{definition}
 \newtheorem{ques}{Question}}
 \newcommand{\LO}{\textrm{LO}}
 \newcommand{\Z}{\mathbb{Z}}
  \newcommand{\R}{\mathbb{R}}
  \newcommand{\tE}{\widetilde{E} }
 \newcommand{\mD}{\mathcal{D} }
  \newcommand{\mA}{\mathcal{A} }
   \newcommand{\mX}{\mathcal{X} }
 \newcommand{\mS}{\mathcal{S} }
\title{Dehornoy-like left orderings and isolated left orderings}
\author{Tetsuya Ito}
\address{Graduate School of Mathematical Science, University of Tokyo, 3-8-1 Komaba Meguro-ku Tokyo 153-8914, Japan}
\email{tetitoh@ms.u-tokyo.ac.jp}
\urladdr{http://ms.u-tokyo.ac.jp/~tetitoh}
\keywords{Orderable groups, Dehornoy-like ordering, isolated ordering}
\begin{document}

\begin{abstract} 
We introduce a Dehornoy-like ordering of groups, which is a generalization of the Dehornoy ordering of the braid groups. Under a weak assumption which we call Property $F$, we show that Dehornoy-like orderings have properties similar to the Dehornoy ordering, and produce isolated left orderings. We also construct new examples of Dehornoy-like ordering and isolated orderings and study their more precise properties.  
\end{abstract}
 \maketitle

\section{Introduction}

A {\it left-ordering} of a group $G$ is a total ordering $<_{G}$ of $G$ preserved by the left action of $G$ itself. That is, $g <_{G} g'$ implies $hg<_{G}hg'$ for all $g,g',h \in G$.
A group $G$ is {\it left-orderable} if $G$ has at least one left-ordering.

One of the most important left ordering is the {\it Dehornoy ordering} of the braid group $B_{n}$. The Dehornoy ordering has a simple, but still mysterious definition which uses a special kind of word representatives called $\sigma$-positive words. The Dehornoy ordering can be regarded as the most natural left ordering of the braid groups, but its combinatorial structure is rather complicated. 

In this paper we introduce a {\it Dehornoy-like ordering} of groups. This is a left-ordering defined in a similar way to the Dehornoy ordering. The aim of this paper is to study Dehornoy-like orderings and give new examples of Dehornoy-like orderings.

The study of the Dehornoy-like ordering produces another interesting family of left-orderings.
Recall that the {\it positive cone} $P=\{g \in G\:|\: 1 <_{G}g\}$ of a left ordering $<_{G}$ has the following two properties {\bf LO1} and {\bf LO2}.
\begin{description}
\item[LO1] $P \cdot P \subset P$.
\item[LO2] $G = P \coprod \{1\} \coprod P^{-1}$.
\end{description}

Conversely, for a subset $P$ of $G$ having the properties {\bf LO1} and {\bf LO2} one can obtain a left-ordering $<_{G}$ by defining $h <_{G} g$ if $h^{-1}g \in P$. 
Thus, the set of all left-orderings of $G$, which we denote by $\LO(G)$, is naturally regarded as a subset of the powerset $2^{G-\{1\}} = \{+,-\}^{G-\{1\}}$. We equip a discrete topology on $2=\{+,-\}$ and equip the power set topology on $2^{G-\{1\}}$. This induces a topology on $\LO(G)$ as the subspace topology. $\LO(G)$ is compact, totally disconnected, and metrizable \cite{s}. It is known that for a countable group $G$, $\LO(G)$ is either finite or uncountable \cite{l}. So $\LO(G)$ is very similar to the Cantor set if $G$ has infinitely many left orderings.

An {\it isolated ordering} is a left-ordering which corresponds to an isolated point of $\LO(G)$.
Isolated orderings are easily characterized by their positive cones. Observe that by {\bf LO1}, the positive cone of a left ordering is a submonoid of $G$. A left-ordering $<$ is isolated if and only if its positive cone is finitely generated as a submonoid of $G$. 

We begin with a systematic study of Dehornoy-like orderings in Section 2.
We introduce a property called {\it Property $F$} for an ordered finite generating set $\mS$, which plays an important role in our study of Dehornoy-like orderings.
We show that Property $F$ allows us to relate Dehornoy-like orderings and isolated orderings in a very simple way. Moreover, using Property $F$ we generalize known properties of the Dehornoy ordering of $B_{n}$ to Dehornoy-like orderings of a general group $G$. 

In section 3, we construct a new example of Dehornoy-like and isolated orderings and study their detailed properties. Our examples are generalization of Navas' example of Dehornoy-like and isolated orderings given in \cite{n2}.

We consider the groups of the form $\Z *_{\Z} \Z$, the amalgamated free product of two infinite cyclic groups. Such a group is presented as
\[ G_{m,n} = \langle x,y \: | \: x^{m}=y^{n} \rangle\;\;\; (m \geq n).\] 
The groups $G_{m,n}$ appear in many contexts.
Observe that $G_{2,2}$ is the Klein bottle group and $G_{3,2}$ is the 3-strand braid group $B_{3}$. For coprime $(m,n)$, $G_{m,n}$ is nothing but the fundamental group of the complement of the $(m,n)$-torus knot. The family of groups $\{G_{m,2}\}$ are the central extension of the Hecke groups, studied by Navas in \cite{n2}. 
We always assume $(m,n) \neq (2,2)$ because the Klein bottle group $G_{2,2}$ is exceptional since it admits only finitely many left orderings. As we will see later, other groups $G_{m,n}$ have infinitely many (hence uncountably many) left orderings.

To give a Dehornoy-like and an isolated ordering, we introduce generating sets $\mS=\{s_{1}=xyx^{-m+1}, s_{2}=x^{m-1}y^{-1}\}$ and $\mA =\{a = x,b = yx^{-m+1}\}$. Using the generator $\mS$, the group $G_{m,n}$ is presented as
\[ G_{m,n}  =  \langle s_{1},s_{2} \: | \: s_{2}s_{1}s_{2}=
( (s_{1}s_{2})^{m-2}s_{1})^{n-1} \rangle \]
Observe that for $(m,n)=(3,2)$, this presentation agrees with the standard presentation of the 3-strand braid group $B_{3}$. Similarly, using the generators $\mA$, the group $G_{m,n}$ is presented as
\[ G_{m,n} = \langle a,b \: | \: (ba^{m-1})^{n-1}b=a \rangle.\]
For $n=2$, the above presentation coincide with Navas' presentation of $G_{m,2}$.
We will show that $\mS$ defines a Dehornoy-like ordering $<_{D}$ of $G_{m,n}$ and $\mA$ defines an isolated ordering $<_{A}$ of $G_{m,n}$ in Theorem \ref{thm:order}.

We will also give an alternative description of the Dehornoy-like ordering $<_{D}$ of $G_{m,n}$ in Theorem \ref{thm:dynamics} by using the action on the Bass-Serre tree.
Such an action is natural since $G_{m,n}=\Z*_{\Z} \Z$ is an amalgamated free product.
In this point of view, the Dehornoy-like ordering $<_{D}$ can be regarded as an natural left ordering of $G_{m,n}$ like the Dehonroy ordering of $B_{3}$, although the combinatorial definition seems to be quite strange.

The dynamics of ordering allows us to give more detailed properties of the Dehornoy-like ordering $<_{D}$. In Theorem \ref{thm:propertyS}, we will show that one particular property of the Dehornoy ordering, called {\em Property $S$ (Subword Property)}, fails for the Dehornoy-like ordering of $G_{m,n}$. However, we observe that the Dehornoy-like ordering of $G_{m,n}$ have a slightly weaker property in Theorem \ref{thm:weakS}.

 As an application, by using the Dehornoy-like ordering of $G_{m,n}$,
we construct left orderings having an interesting property: a left-ordering which admits no non-trivial proper convex subgroups. In fact, we observe that almost all normal subgroup of $G_{m,n}$ contains no non-trivial proper convex subgroup in Theorem \ref{thm:exotic}.\\

\textbf{Acknowledgments.} 
This research was supported by JSPS Research Fellowships for Young Scientists.

\section{Dehornoy-like ordering}

Throughout the paper, we always assume that $G$ is a finitely generated group.

\subsection{Dehornoy-like ordering}

Let $\mS= \{s_{1},\ldots,s_{n}\}$ be an ordered finite generating set of $G$. We consider the two submonoids of $G$ defined by $\mS$, the {\em $\mS$-word positive monoid} and the {\em $\sigma(\mS)$-positive monoid}.

A ($\mS$-) positive word is a word on $\mS$. We say an element $g \in G$ is ($\mS$-) {\em word positive} if $g$ is represented by a $\mS$-positive word.
The set of all $\mS$-word positive elements form a submonoid $P_{\mS}$ of $G$, which we call the ($\mS$-){\em word positive monoid}.
The $\mS$-word positive monoid is nothing but a submonoid of $G$ generated by $\mS$.

To define a Dehornoy-like ordering, we introduce slightly different notions.
A word $w$ on $\mS \cup\, \mS^{-1}$ is called {\it $i$-positive} (or, {\em $i(\mS)$-positive}, if we need to indicate the ordered finite generating set $\mS$) if $w$ contains at least one $s_{i}$ but contains no $s_{1}^{ \pm 1},\ldots, s_{i-1}^{\pm 1}, s_{i}^{-1}$. We say an element $g \in G$ is {\em $i$-positive} (or, {\em $i(\mS)$-positive}) if $g$ is represented by an $i$-positive word.
An element $g \in G$ is called $\sigma$-positive ({\em $\sigma(\mS)$-positive}) if $g$ is $i$-positive for some $1 \leq i \leq n$. The notions of {\em $i$-negative} and {\em $\sigma$-negative} are defined in the similar way. The set of $\sigma(\mS)$-positive elements of $G$ forms a submonoid $\Sigma_{\mS}$ of $G$. 
We call the monoid $\Sigma_{\mS}$ the {\em $\sigma$-positive monoid} (or, {\em $\sigma(\mS)$-positive monoid}).

\begin{defn}[Dehornoy-like ordering]
A {\em Dehornoy-like ordering} is a left ordering $<_{D}$ whose positive cone is equal to the $\sigma$-positive monoid $\Sigma_{\mS}$ for some ordered finite generating set $\mS$ of $G$. In this situation, we say $\mS$ {\em defines} a Dehornoy-like ordering.
\end{defn}

To study Dehornoy-like orderings we introduce the following two properties.

\begin{defn}
Let $\mS$ be an ordered finite generating set of $G$. 
\begin{enumerate}
\item We say $\mS$ has {\it Property $A$} ({\em the Acyclic property}) if no $\sigma(\mS)$-positive words represent the trivial element. That is, $\Sigma_{\mS}$ does not contains the identity element $1$. 
\item We say $\mS$ has {\it Property $C$} ({\em the Comparison property}) if every non-trivial element of $G$ admits either $\sigma(\mS)$-positive or $\sigma(\mS)$-negative word expression.
\end{enumerate}
\end{defn}

\begin{prop}
Let $\mS$ be an ordered finite generating set of a group $G$.
Then $\mS$ defines a Dehornoy-like ordering if and only if $\mS$ has both Property $A$ and Property $C$.
\end{prop}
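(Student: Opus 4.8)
The plan is to reduce everything to a direct check of the positive-cone axioms \textbf{LO1} and \textbf{LO2} for the single set $\Sigma_{\mS}$. By definition, $\mS$ defines a Dehornoy-like ordering precisely when $\Sigma_{\mS}$ is the positive cone of some left ordering, and by the correspondence between positive cones and left orderings recalled in the introduction this holds exactly when $\Sigma_{\mS}$ satisfies \textbf{LO1} and \textbf{LO2}. So the proposition amounts to showing that $\Sigma_{\mS}$ satisfies \textbf{LO1} and \textbf{LO2} if and only if $\mS$ has Property $A$ and Property $C$.

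The preliminary observation I would isolate first is a word-level symmetry: reversing a word on $\mS \cup \mS^{-1}$ and inverting each letter sends $i$-positive words to $i$-negative words and conversely. Hence an element $g$ is $\sigma$-negative if and only if $g^{-1}$ is $\sigma$-positive; equivalently, the set of $\sigma$-negative elements of $G$ equals $\Sigma_{\mS}^{-1}$. Under this identification, Property $C$ says precisely that $G = \{1\} \cup \Sigma_{\mS} \cup \Sigma_{\mS}^{-1}$, while Property $A$ says $1 \notin \Sigma_{\mS}$.

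Assume now that $\mS$ has Properties $A$ and $C$. Axiom \textbf{LO1}, that is $\Sigma_{\mS}\cdot\Sigma_{\mS}\subset\Sigma_{\mS}$, is already available since $\Sigma_{\mS}$ is closed under multiplication (being the submonoid of $\sigma(\mS)$-positive elements). For \textbf{LO2} it remains to upgrade the covering $G = \{1\}\cup\Sigma_{\mS}\cup\Sigma_{\mS}^{-1}$ supplied by Property $C$ to a disjoint union. If some $g$ lay in $\Sigma_{\mS}\cap\Sigma_{\mS}^{-1}$, then $g$ and $g^{-1}$ would both be $\sigma$-positive, so by \textbf{LO1} their product $1$ would be $\sigma$-positive, contradicting Property $A$; the same remark excludes $1$ from $\Sigma_{\mS}$ and from $\Sigma_{\mS}^{-1}$. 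Thus $\Sigma_{\mS}$ obeys \textbf{LO1} and \textbf{LO2}, so it is a positive cone, and the left ordering it determines is Dehornoy-like by definition. Conversely, if $\mS$ defines a Dehornoy-like ordering then $\Sigma_{\mS}$ is a positive cone and hence satisfies \textbf{LO2}; from $G=\Sigma_{\mS}\coprod\{1\}\coprod\Sigma_{\mS}^{-1}$ one reads off $1\notin\Sigma_{\mS}$, which is Property $A$, and $G\setminus\{1\}=\Sigma_{\mS}\cup\Sigma_{\mS}^{-1}$, which is Property $C$ via the identification of $\sigma$-negative elements with $\Sigma_{\mS}^{-1}$.

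I do not expect a genuine obstacle here: the whole argument is unwinding definitions. The only points deserving a little care are the word-level identification of $\sigma$-negative elements with inverses of $\sigma$-positive ones, so that the decomposition in \textbf{LO2} really matches the dichotomy in Property $C$, and the use of the already-established closure of $\Sigma_{\mS}$ under multiplication to convert Property $A$ into the disjointness needed for \textbf{LO2}.
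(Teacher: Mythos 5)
Your proposal is correct and follows essentially the same route as the paper: translate Property $C$ into the covering $G=\Sigma_{\mS}\cup\Sigma_{\mS}^{-1}\cup\{1\}$ and Property $A$ into disjointness, and check \textbf{LO1}, \textbf{LO2} for $\Sigma_{\mS}$; you merely spell out the two small points (that $\sigma$-negative elements are exactly the inverses of $\sigma$-positive ones, and that disjointness follows from Property $A$ together with closure under multiplication) which the paper leaves implicit.
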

\begin{proof}
Property $C$ implies that $G= \Sigma_{\mS} \cup \Sigma_{\mS}^{-1} \cup \{1\}$, and the Property $A$ implies that $\Sigma_{\mS}$, $\Sigma_{\mS}^{-1}$ and $\{1\}$ are disjoint. Thus, the $\sigma(\mS)$-positive monoid $\Sigma_{\mS}$ satisfies both {\bf LO1} and {\bf LO2}. Converse is clear.
\end{proof}

As we have already mentioned, the definition of Dehornoy-like orderings is motivated from the Dehornoy ordering of the braid groups.

\begin{exam}[Dehornoy ordering of $B_{n}$]
Let us consider the standard presentation of the braid group $B_{n}$,
\[ B_{n} = 
\left \langle 
\sigma_{1},\ldots,\sigma_{n-1}  \;
\begin{array}{|ll}
 \sigma_{i}\sigma_{j}\sigma_{i}=\sigma_{i}\sigma_{j}\sigma_{i} &|i-j|=1 \\
\sigma_{i}\sigma_{j}=\sigma_{j}\sigma_{i}& |i-j|>1 
\end{array}
\right\rangle \]
and $\mS=\{ \sigma_{1},\ldots,\sigma_{n-1}\}$ be the set of the standard generators.
The seminal work of Dehornoy \cite{d1} shows that $\mS$ has both Property $A$ and Property $C$, hence $\mS$ defines a left-ordering of $B_{n}$. The ordering $<_{D}$ defined by $\mS$ is called the {\it Dehornoy ordering}.

Interestingly, there are various proof of Property $A$ and Property $C$, and each proof gives a new characterization of the Dehornoy ordering. A proof of Property $A$ or Property $C$ provides new insights for not only the Dehornoy ordering, but also the braid group itself. See \cite{ddrw} for the theory of the Dehornoy orderings. Moreover, as the author showed, the Dehornoy orderings are also related to the knot theory \cite{i1},\cite{i2}.
\end{exam}

Now we introduce an operation to construct new ordered finite generating sets from an ordered finite generating set, which connects a Dehornoy-like ordering and an isolated ordering.

The {\em twisted generating set} of $\mS$ is an ordered finite generating set $\mA=\mA_{\mS}=\{a_{1},\ldots,a_{n}\}$ where each $a_{i}$ is defined by
\[ a_{i}= (s_{i}\cdots s_{n-1})^{(-1)^{n-i+1}}. \]

An ordered finite generating set $\mD=\mD_{\mS}=\{d_{1},\ldots,d_{n}\}$ whose twisted generating set is equal to $\mS$ is called the {\em detwisted generating set} of $\mS$.
The detwisted generating set $\mD$ is explicitly give as 
\[ d_{i}= \left\{
\begin{array}{ll}
s_{n}^{-1} & i=n\\
s_{i}^{-1}s_{i+1}^{-1} & n-i :\textrm{ even} \\
s_{i}s_{i+1} & n-i : \textrm{ odd} \\
\end{array}
\right.
\]

For each $1\leq i \leq n$, let  $\mS^{(i)}=\{s_{i},s_{i+1},\ldots,s_{n}\}$ and $G^{(i)}_{\mS}$ be the subgroup of $G$ generated by $\mS^{(i)}$. Thus, $\mS^{(i)}$ is an ordered finite generating set of $G^{(i)}_{\mS}$.
We denote the $\mS^{(i)}$-word positive monoid $P_{\mS^{(i)}}$ and the $\sigma(\mS^{(i)})$-positive monoid $\Sigma_{\mS^{(i)}}$ by $P_{\mS}^{(i)}$, $\Sigma_{\mS}^{(i)}$ respectively.
They are naturally regarded as submonoids of $G_{\mS}^{(i)}$.
By definition of the twisted generating set, $\mA_{\mS^{(i)}} = (\mA_{\mS})^{(i)}$. Thus, 
$G^{(i)}_{\mS} = G^{(i)}_{\mA}$ so we will often write $G^{(i)}$ to represent $G^{(i)}_{\mS} = G^{(i)}_{\mA}$.

There is an obvious inclusion for $\sigma(\mS)$-positive and $\mA$-word positive monoids. 

\begin{lem}
\label{lem:inclusion}
Let $\mS$ be an ordered finite generating set and $\mA = \mA_{\mS}$ be the twisted generating set of $\mS$.
Then $\Sigma_{\mS} \cup \Sigma_{\mS}^{-1} \supset P_{\mA} \cup P_{\mA}^{-1}$.
\end{lem}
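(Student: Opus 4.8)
The plan is to unwind the explicit formula for the twisted generating set and read off which ``level'' a positive $\mA$-word sits at. Write $\epsilon_i = (-1)^{n-i+1}$, so that $a_i$ equals $s_i s_{i+1}\cdots s_n$ when $\epsilon_i = +1$ and equals $s_n^{-1}s_{n-1}^{-1}\cdots s_{i+1}^{-1}s_i^{-1}$ when $\epsilon_i = -1$. Both sides of the claimed inclusion are invariant under $g \mapsto g^{-1}$, since inversion exchanges $\Sigma_\mS$ with $\Sigma_\mS^{-1}$ and $P_\mA$ with $P_\mA^{-1}$; so it suffices to show $P_\mA \subset \Sigma_\mS \cup \Sigma_\mS^{-1}$, and I may take a nontrivial element represented by a nonempty positive $\mA$-word $W = a_{i_1}a_{i_2}\cdots a_{i_k}$, the identity being immediate.

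Let $i = \min\{i_1,\dots,i_k\}$ and substitute the above expressions for each $a_{i_j}$ to obtain a word $\widehat{W}$ on $\mS \cup \mS^{-1}$ representing the same element $g$ as $W$. The key bookkeeping is the following. Each factor $a_{i_j}$ contributes to $\widehat{W}$ only letters $s_\ell^{\pm 1}$ with $\ell \geq i_j \geq i$, so $\widehat{W}$ contains none of $s_1^{\pm1},\dots,s_{i-1}^{\pm1}$. Moreover, a factor $a_{i_j}$ with $i_j > i$ contributes no occurrence of $s_i^{\pm1}$ whatsoever, while a factor with $i_j = i$ contributes exactly one occurrence of $s_i$ if $\epsilon_i = +1$, or exactly one occurrence of $s_i^{-1}$ if $\epsilon_i = -1$, and no other $s_i^{\pm1}$. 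Since the value $i$ is attained by at least one index, $\widehat{W}$ contains at least one letter $s_i^{\epsilon_i}$ and no letter $s_i^{-\epsilon_i}$.

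Combining these observations, $\widehat{W}$ is an $i(\mS)$-positive word when $\epsilon_i = +1$ and an $i(\mS)$-negative word when $\epsilon_i = -1$; in either case $g \in \Sigma_\mS \cup \Sigma_\mS^{-1}$. This proves $P_\mA \subset \Sigma_\mS \cup \Sigma_\mS^{-1}$, and the symmetry observed at the outset upgrades this to $P_\mA \cup P_\mA^{-1} \subset \Sigma_\mS \cup \Sigma_\mS^{-1}$, as desired.

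I do not expect a genuine obstacle here: the proof is a single substitution, and the only point requiring care is that the lowest-index letter $s_i$ occurs throughout $\widehat{W}$ with one consistent exponent. This is exactly what the alternating signs $\epsilon_i = (-1)^{n-i+1}$ built into the twisted generating set are designed to achieve, and it is the reason the definition of $\mA_\mS$ takes the form it does.
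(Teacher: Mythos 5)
Your proof is correct and takes essentially the same route as the paper: the paper also picks the minimal index $i$ occurring in an $\mA$-positive word and notes that rewriting via $a_{i}=(s_{i}s_{i+1}\cdots s_{n})^{(-1)^{n-i+1}}$ produces an $i(\mS)$-positive or $i(\mS)$-negative word according to that sign. Your explicit letter-by-letter bookkeeping and the inversion-symmetry reduction for $P_{\mA}^{-1}$ (where the paper simply says the argument is similar) are only expository differences, not a different argument.
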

\begin{proof}
We show $P_{\mA} \subset \Sigma_{\mS} \cup \Sigma_{\mS}^{-1}$. The proof of $P_{\mA}^{-1} \subset \Sigma_{\mS} \cup \Sigma_{\mS}^{-1}$ is similar. Let $g \in P_{\mA}$ and $w$ be an $\mA$-positive word expression of $g$. Put 
\[ i = \min\, \{j \in \{1,2,\ldots,n\} \: | \: w \textrm{ contains the letter } a_{j}\}.\]
Since $a_{i}= (s_{i}s_{i+1}\cdots s_{n})^{(-1)^{n-i+1}}$, $g$ is $\sigma(\mS)$-positive if $(n-i)$ is odd and is $\sigma(\mS)$-negative if $(n-i)$ is even.
\end{proof}

Now we introduce a key property called {\em Property $F$} ({\em the Filtration property}) which allows us to generalize various properties of the Dehornoy ordering for Dehornoy-like orderings.
 
\begin{defn}
Let $\mS=\{s_{1},\ldots,s_{n}\}$ be an ordered finite generating set of $G$ and $\mA=\{a_{1},\ldots,a_{n}\}$ be the twisted generating set of $\mS$.
We say $\mS$ has {\em Property $F$} ({\em the Filtration property}) if
\begin{description}
\item[F] $a_{i} \cdot (P_{\mA}^{(i+1)} )^{-1} \cdot a_{i}^{-1} \subset P_{\mA}^{(i)}$, $a_{i}^{-1}\cdot   ( P_{\mA}^{(i+1)} )^{-1}\cdot a_{i} \subset  P_{\mA}^{(i)}$
\end{description}
hold.
\end{defn}

We say a finite generating set $\mA$ {\em defines} an isolated ordering if the $\mA$-word positive monoid is the positive cone of an isolated ordering $<_{A}$. First we show a Dehornoy-like ordering and an isolated ordering are closely related if we assume Property $F$.

\begin{thm}
\label{thm:DOtoIO}
Let $\mS$ be an ordered finite generating set of $G$ having Property $F$ and $\mA$ be the twisted generating set of $\mS$. Then $\mS$ defines a Dehornoy-like ordering of $G$ if and only if $\mA$ defines an isolated left ordering of $G$.
\end{thm}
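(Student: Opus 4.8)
The plan is to show the two implications separately, in each case producing an explicit finite generating set for the relevant positive cone and then verifying {\bf LO1}, {\bf LO2}. The main tool throughout will be Lemma \ref{lem:inclusion}, which already tells us that $P_{\mA}\cup P_{\mA}^{-1}\subset \Sigma_{\mS}\cup\Sigma_{\mS}^{-1}\cup\{1\}$, together with the sign bookkeeping (an $\mA$-positive word whose lowest index is $i$ is $\sigma(\mS)$-positive iff $n-i$ is odd). The role of Property $F$ will be to upgrade the \emph{word-level} filtration into a genuine \emph{group-level} filtration: it guarantees that conjugating a lower piece by $a_i^{\pm1}$ stays inside the monoid generated by the higher $a_j$'s, which is exactly the kind of normal-form/ping-pong statement one needs to prove Property $A$ and Property $C$ from their counterparts, and conversely.

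First I would treat the direction ``$\mA$ defines an isolated ordering $\Rightarrow$ $\mS$ defines a Dehornoy-like ordering''. Since $P_{\mA}$ is the positive cone of a left ordering $<_A$, it satisfies {\bf LO1} and {\bf LO2} for $\mA$. I claim $\Sigma_{\mS}=P_{\mA}\sqcup (\text{something controlled by }G^{(2)})$; more precisely, by induction on $n$ one shows that every $\sigma(\mS)$-positive element is, after stripping its leading $s_1^{\,k}$-type contribution, represented by an $\mA$-positive or $\mA$-negative word, using Property $F$ to push the $a_1$'s to one side. This yields Property $C$ for $\mS$ (every nontrivial $g$ is $\sigma(\mS)$-positive or negative because it is $\mA$-positive or negative by {\bf LO2} for $\mA$) and Property $A$ for $\mS$: if a $\sigma(\mS)$-positive word represented $1$, the filtration would force it down to an $\mA$-positive word representing $1$, contradicting {\bf LO2} for $\mA$. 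Then Proposition 1 gives that $\mS$ defines a Dehornoy-like ordering. For the converse direction, assume $\mS$ has Properties $A$ and $C$. Lemma \ref{lem:inclusion} plus Property $A$ shows $P_{\mA}$ does not contain $1$ (any $\mA$-positive word representing $1$ would be $\sigma(\mS)$-positive or negative, contradiction), and $P_{\mA}\cap P_{\mA}^{-1}=\{1\}$ similarly, so $P_{\mA}$ satisfies {\bf LO1} and half of {\bf LO2}. The content is the trichotomy $G=P_{\mA}\sqcup\{1\}\sqcup P_{\mA}^{-1}$: given a nontrivial $g$, I would use Property $C$ to write $g$ as $\sigma(\mS)$-(say) positive, i.e. $i$-positive for some $i$, then induct down from $i=n$ (where $\mS^{(n)}=\{s_n\}$ and the claim is the trivial statement about $\langle s_n\rangle$), at each stage applying {\bf F} to rewrite an $i$-positive word of $\mS$ as an $\mA$-word whose leading index and sign are determined, thereby placing $g$ in $P_{\mA}$ or $P_{\mA}^{-1}$. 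Finally, since $\mA$ is a finite generating set and $P_{\mA}$ is by construction generated as a monoid by the finite set $\mA$, the resulting ordering $<_A$ has finitely generated positive cone, hence is isolated by the characterization recalled in the introduction.

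The step I expect to be the main obstacle is the inductive rewriting procedure — turning an arbitrary $i(\mS)$-positive word into a normal form that visibly lies in $P_{\mA}^{(i)}\cdot\big(\text{stuff in }G^{(i+1)}\big)$ with controlled sign. One must carefully handle the interaction between the single ``top'' letter $s_i^{\pm1}$ (equivalently $a_i^{\mp1}$ after detwisting) and arbitrarily long subwords in the remaining generators: every time an $a_i^{\pm1}$ is moved past a block of $(P_{\mA}^{(i+1)})^{\pm1}$, Property $F$ must be invoked to keep that block inside $P_{\mA}^{(i)}$, and one has to check this does not create new low-index letters or change the sign. Getting the bookkeeping of signs $(-1)^{n-i+1}$ consistent between the $\mS$-side and the $\mA$-side is fiddly but routine once the filtration lemma is set up; the genuinely load-bearing input is {\bf F}. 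I would organize the argument as a single lemma — ``under Property $F$, for each $i$ the set of $i(\mS)$-positive elements equals $a_i^{\varepsilon_i} P_{\mA}^{(i+1)} \cup a_i^{\varepsilon_i}(P_{\mA}^{(i+1)})^{-1}\cup\dots$ with $\varepsilon_i=(-1)^{n-i+1}$'' — and then both implications of the theorem fall out by comparing this description with {\bf LO1}/{\bf LO2} on the two sides.
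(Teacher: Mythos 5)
Your proposal follows essentially the same route as the paper: both directions are reduced, by induction along the filtration $G^{(i)}$, to showing that Property $C$ is equivalent to $G=P_{\mA}\cup P_{\mA}^{-1}\cup\{1\}$ and Property $A$ to $1\notin P_{\mA}$, with Lemma \ref{lem:inclusion} supplying the easy inclusions, Property $F$ used exactly as you describe to push $a_{1}^{\pm1}$ past blocks from $(P_{\mA}^{(2)})^{-1}$ after rewriting $s_{1}=a_{1}a_{2}$ (up to the parity of $n$), and finite generation of $P_{\mA}$ giving isolatedness. The rewriting step you defer as ``fiddly but routine'' is precisely the content of the paper's Claims \ref{claim:1} and \ref{claim:2}, so your plan is correct and matches the published argument.
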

\begin{proof}
Let $n$ be the cardinal of the generating set $\mS$.
We prove theorem by induction on $n$. The case $n=1$ is trivial.
General cases follow from the following two claims. 
\begin{claim}
\label{claim:1}
$\mS$ has Property $C$ if and only if $P_{\mA} \cup P_{\mA}^{-1} \cup \{1\} = G$ holds.
\end{claim}

By Lemma \ref{lem:inclusion}, $G= P_{\mA} \cup P_{\mA}^{-1} \cup \{1\} \subset \Sigma_{\mS} \cup \Sigma_{\mS}^{-1} \cup \{1\} = G$.

To show converse, assume that $\mS$ has Property $C$. Let $g \in G$ be a non-trivial element. We assume that $g$ is $\sigma(\mS)$-positive. The case $g$ is $\sigma(\mS)$-negative is proved in a similar way. 

First of all, assume that $g$ has a $k(\mS)$-positive word representative for $k>1$. Then $g \in G^{(2)}$ and $g$ is $\sigma(\mS^{(2)})$-positive. By inductive hypothesis, $g \in P_{\mA}^{(2)} \cup (P_{\mA}^{(2)})^{-1} \cup \{1\} \subset P_{\mA} \cup P_{\mA}^{-1} \cup \{1\} $. 

Thus we assume that $g$ is $1(\mS)$-positive. We also assume that $n$ is even. The case $n$ is odd is similar. Since $s_{1}=a_{1}a_{2}$, by rewriting a $1(\mS)$-positive word representative of $g$ by using the twisted generating set $\mA$, we write $g$ as
\[ g = V_{0} a_{1} V_{1} \cdots a_{1}V_{m} \]
where $V_{i}$ is a word on $\mA^{(2)} \cup (\mA^{(2)})^{-1} \subset G^{(2)}$.
By inductive hypothesis, we may assume that either $V_{i} \in P_{\mA}^{(2)}$ or $V_{i} \in ( P_{\mA}^{(2)} )^{-1}$. If all $V_{i}$ belong to $P_{\mA}^{(2)}$, then $g \in P_{\mA}$.
Assume that some $V_{i}$ belongs to $(P_{\mA}^{(2)} )^{-1}$.
By Property $F$, $a_{1}V_{i} \subset P_{\mA} \cdot a_{1}$ and $V_{i}a_{1} \subset a_{1}\cdot P_{\mA} $, so we can rewrite $g$ so that it belongs to $P_{\mA}$.

\begin{claim}
\label{claim:2}
$\mS$ has Property $A$ if and only if $1 \not \in P_{\mA}$.
\end{claim}
Assume that $1 \not \in P_{\mA}$ and let $g \in G$ be a $\sigma(\mS)$-positive element. If $g$ has a $k(\mS)$-positive word representative for $k>1$, then $g \in G^{(2)}$ so inductive hypothesis shows $g \neq 1$.
Thus we assume $g$ is $1(\mS)$-positive. Assume that $n$ is even. Then as we have seen in the proof of Claim \ref{claim:1}, $g \in P_{\mA}$ so we conclude $g \neq 1$. The case $n$ is odd, and the case $g$ is $\sigma(\mS)$-negative are proved in a similar way.
Converse is obvious from Lemma \ref{lem:inclusion}.
\end{proof}

Thus, we obtain a new criterion for an existence of isolated orderings.

\begin{cor}
Let $G$ be a left-orderable group. If $G$ has a Dehornoy-like ordering having Property $F$, then $G$ also has an isolated left ordering.
\end{cor}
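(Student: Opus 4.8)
The plan is to deduce this corollary directly from Theorem~\ref{thm:DOtoIO}. First I would unpack the hypothesis. Saying that $G$ has a Dehornoy-like ordering having Property $F$ means, by definition, that there is an ordered finite generating set $\mS$ of $G$ which defines a Dehornoy-like ordering $<_{D}$ and which has Property $F$. By the Proposition characterizing Dehornoy-like orderings in terms of Properties $A$ and $C$, such an $\mS$ automatically has both Property $A$ and Property $C$; equivalently, the $\sigma(\mS)$-positive monoid $\Sigma_{\mS}$ is precisely the positive cone of $<_{D}$. So the hypothesis hands us exactly the object that Theorem~\ref{thm:DOtoIO} takes as input: a generating set with Property $F$ that defines a Dehornoy-like ordering.

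The second and essentially only step is to invoke Theorem~\ref{thm:DOtoIO} with this $\mS$. Since $\mS$ has Property $F$ and defines a Dehornoy-like ordering, the theorem tells us that the twisted generating set $\mA = \mA_{\mS}$ defines an isolated left ordering $<_{A}$ of $G$; that is, the $\mA$-word positive monoid $P_{\mA}$ is the positive cone of an isolated point of $\LO(G)$. In particular $G$ admits an isolated left ordering, which is exactly the assertion of the corollary. Thus the proof is nothing more than combining the defining Proposition for Dehornoy-like orderings with Theorem~\ref{thm:DOtoIO}, and there is no real obstacle: all the content has already been carried out in that theorem.

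The only points I would be slightly careful about are bookkeeping ones. First, Property $F$ is a property of an ordered finite generating set rather than of an ordering, so the statement and any later use of it should be read with the understanding that it asserts the existence of a single $\mS$ that simultaneously defines $<_{D}$ and satisfies \textbf{F}. Second, the hypothesis that $G$ is left-orderable is in fact redundant, since possessing a Dehornoy-like ordering already exhibits a left ordering of $G$; I would either drop it or keep it only for emphasis. Neither of these affects the one-line argument above.
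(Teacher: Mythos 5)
Your proof is correct and matches the paper's intent exactly: the corollary is stated as an immediate consequence of Theorem~\ref{thm:DOtoIO} (the paper offers no separate argument), and your invocation of that theorem via the twisted generating set $\mA$ is precisely the intended reasoning. Your side remarks, including the redundancy of the left-orderability hypothesis, are accurate and harmless.
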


Theorem \ref{thm:DOtoIO} is motivated from the construction of the Dubrovina-Dubrovin orderings, which are left-ordering obtained by modifying the Dehornoy ordering.

\begin{exam}[Dubrovina-Dubrovin ordering]
Let  $\mA = \{ a_{1},\ldots,a_{n-1}\}$ be the twisted generating set of the standard generating set $\mS = \{\sigma_{1},\ldots,\sigma_{n-1}\}$ of the braid group $B_{n}$. 
$\mS$ has the property $F$, so, the submonoid $\mA$ defines an isolated left-ordering which is known as the {\it Dubrovina-Dubrovin ordering} $<_{DD}$ \cite{dd}.
\end{exam}

\subsection{Property of Dehornoy-like orderings}

In this section we study fundamental properties of Dehornoy-like orderings and isolated orderings derived from the Dehornoy-like orderings.

Let $\mS= \{ s_{1},\ldots,s_{n} \} (n>1)$ be an ordered finite generating set of a group $G$ which defines a Dehornoy-like ordering $<_{D}$ and $\mA$ be the twisted generating set of $\mS$.

We begin with recalling standard notions of left orderable groups.
A left-ordering $<_{G}$ of $G$ is called {\it discrete} if there is the $<_{G}$-minimal positive element. Otherwise, $<_{G}$ is called {\it dense}.  $<_{G}$ is called a {\it Conradian ordering} if $fg^{k}>_{G} g$ holds for all $<_{G}$-positive $f,g \in G$ and $k\geq 2$. It is known that in the definition of Conradian orderings it is sufficient to consider the case $k=2$. That is, $<_{G}$ is Conradian if and only of $fg^{2} >_{G} g$ holds for all $<_{G}$-positive elements $f,g \in G$ (See \cite{n1}). 

A subgroup $H$ of $G$ is {\it $<_{G}$-convex} if $h <_{G} g <_{G} h'$ for $h,h'\in H$ and $g \in G$, then $g \in H$ holds. $<_{G}$-convex subgroups form a chain. That is, for $<_{G}$-convex subgroups $H, H'$, either $H \subset H'$ or $H' \subset H$ holds.
The {\it $<_{G}$-Conradian soul} is the maximal (with respect to inclusions) $<_{G}$-convex subgroup of $G$ such that the restriction of $<_{G}$ is Conradian.

First of all, we observe that a Dehornoy-like ordering have the following good properties with respect to the restrictions.

\begin{prop}
\label{prop:convexD}
Let $\mS=\{s_{1},\ldots,s_{n}\}$ be an ordered finite generating set of $G$ which defines a Dehornoy-like ordering $<_{D}$.
\begin{enumerate}
\item For $1 \leq i \leq n$, $\mS^{(i)}$ defines a Dehornoy-like ordering $<_{D}^{(i)}$ of $G^{(i)}$. Moreover, the restriction of the Dehornoy-like ordering $<_{D}$ to $G^{(i)}$ is equal to the Dehornoy-like ordering $<_{D}^{(i)}$.
\item For $1\leq i \leq n$, the subgroup $G^{(i)}$ is $<_{D}$-convex.
In particular, $<_{D}$ is discrete, and the minimal $<_{D}$-positive element is $s_{n}$.
\item If $H$ is a $<_{D}$-convex subgroups of $G$, then $H=G^{(i)}$ for some $1 \leq i \leq n$.
\end{enumerate}
\end{prop}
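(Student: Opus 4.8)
The plan is to prove the three parts in order; the whole argument rests on one observation, which I would record first as a lemma --- call it the \emph{range lemma}: if $g\in G^{(i)}$ and $g$ is $k(\mS)$-positive (resp.\ $k(\mS)$-negative), then $k\geq i$. Since $\mS$ defines a Dehornoy-like ordering it has Property $A$; so if $k<i$ I may take a $k(\mS)$-positive word $v$ for $g$ together with any word $w$ on $\mS^{(i)}\cup(\mS^{(i)})^{-1}$ for $g$, and since $i\geq k+1$ the word $w^{-1}$ contains none of $s_{1}^{\pm1},\dots,s_{k}^{\pm1}$, so $vw^{-1}$ is still $k(\mS)$-positive and represents $1$ --- contradicting Property $A$; the negative case is symmetric. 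Part (1) then follows quickly. Property $A$ for $\mS^{(i)}$ holds because a $\sigma(\mS^{(i)})$-positive word only involves the letters $s_{i}^{\pm1},\dots,s_{n}^{\pm1}$ and is therefore also $\sigma(\mS)$-positive, hence cannot represent $1$; and for Property $C$, any $g\in G^{(i)}\setminus\{1\}$ has, by Property $C$ for $\mS$, a $k(\mS)$-positive or $k(\mS)$-negative word, which by the range lemma has $k\geq i$ and so avoids $s_{1}^{\pm1},\dots,s_{i-1}^{\pm1}$ and witnesses $\sigma(\mS^{(i)})$-positivity or negativity. Thus $\mS^{(i)}$ defines a Dehornoy-like ordering $<_{D}^{(i)}$ with positive cone $\Sigma_{\mS}^{(i)}$; since $\Sigma_{\mS}^{(i)}\subseteq\Sigma_{\mS}\cap G^{(i)}$ is trivial and the reverse inclusion is again the range lemma, $<_{D}^{(i)}$ is the restriction of $<_{D}$ to $G^{(i)}$.

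For part (2), to prove $G^{(i)}$ is $<_{D}$-convex it suffices, after left-translating by an element of $G^{(i)}$, to show that $1<_{D}g<_{D}h'$ with $h'\in G^{(i)}$ forces $g\in G^{(i)}$. Write $g$ as a $k(\mS)$-positive word; if $k\geq i$ we are done, so suppose $k<i$. Then $g\in G^{(k)}$, and $g^{-1}h'\in G^{(k)}$ is $l(\mS)$-positive with $l\geq k$ by the range lemma. If $l\geq k+1$, the $l$-positive word for $g^{-1}h'$ avoids $s_{1}^{\pm1},\dots,s_{k}^{\pm1}$, so $g^{-1}h'\in G^{(k+1)}$ and hence $g=h'(g^{-1}h')^{-1}\in G^{(k+1)}$, contradicting the range lemma since $g$ is $k(\mS)$-positive. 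If $l=k$, then $h'^{-1}g$ is $k(\mS)$-negative; prepending a word for $h'\in G^{(k+1)}$ (which avoids $s_{1}^{\pm1},\dots,s_{k}^{\pm1}$) shows $g$ is also $k(\mS)$-negative, and then a $k$-positive word for $g$ followed by a $k$-positive word for $g^{-1}$ is a $k(\mS)$-positive word representing $1$, again contradicting Property $A$. Hence $k\geq i$ and $g\in G^{(i)}$. Discreteness is a corollary: by part (1), $\mS^{(n)}=\{s_{n}\}$ defines a Dehornoy-like ordering of $G^{(n)}=\langle s_{n}\rangle$, which forces $G^{(n)}\cong\Z$ with least positive element $s_{n}$, and since $G^{(n)}$ is $<_{D}$-convex, $s_{n}$ is also the least $<_{D}$-positive element of $G$.

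For part (3), the key step, which I would isolate as a lemma and where the real content lies, is that \emph{if $H$ is $<_{D}$-convex and $H\not\subseteq G^{(2)}$, then $H=G$}. To prove it, the chain property of convex subgroups first gives $G^{(2)}\subseteq H$; then choose $h\in H\setminus G^{(2)}$ with $h>_{D}1$ (replacing $h$ by $h^{-1}$ if necessary). As in part (2), this $h$ must be $1(\mS)$-positive, so it has a representative $u_{0}s_{1}u_{1}s_{1}\cdots s_{1}u_{p}$ with $p\geq1$ and each $u_{j}\in G^{(2)}\subseteq H$. Hence $g':=u_{0}^{-1}hu_{p}^{-1}=s_{1}u_{1}s_{1}\cdots u_{p-1}s_{1}$ lies in $H$; if $p=1$ this is $s_{1}$ itself, and if $p\geq2$ then $s_{1}^{-1}g'$ is again a $1(\mS)$-positive word, so $1<_{D}s_{1}<_{D}g'$ and convexity of $H$ gives $s_{1}\in H$. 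Either way $s_{1}\in H$, so $H\supseteq\langle s_{1}\rangle\cdot G^{(2)}=G$. With this lemma in hand, given a nontrivial convex $H$ I let $i$ be the largest index with $H\subseteq G^{(i)}$; if $H\neq G^{(i)}$ then $H\subsetneq G^{(i)}$ while $H\not\subseteq G^{(i+1)}$ (with the convention $G^{(n+1)}=\{1\}$), and applying the lemma inside $(G^{(i)},<_{D}^{(i)},\mS^{(i)})$ --- legitimate by parts (1) and (2) --- forces $H=G^{(i)}$, a contradiction. (In this extended notation the trivial subgroup is $G^{(n+1)}$.)

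I expect the main obstacle to be the convexity of $G^{(i)}$ in part (2): one must exclude a $k(\mS)$-positive element with small $k$ being squeezed below an element of $G^{(i)}$, and the subcase $l=k$ above --- where $g$ turns out to be simultaneously $k(\mS)$-positive and $k(\mS)$-negative --- is the delicate point, although it too reduces in the end to Property $A$. Apart from that, everything is a fairly mechanical consequence of Properties $A$ and $C$, the normal form of $1(\mS)$-positive words, and the fact that $<_{D}$-convex subgroups form a chain.
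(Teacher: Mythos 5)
Your proof is correct and follows essentially the same route as the paper: Properties $A$ and $C$ are passed down to $\mS^{(i)}$ by concatenating a word over $\mS^{(i)}\cup(\mS^{(i)})^{-1}$ with a $k$-positive word (your range lemma is exactly the paper's key observation), convexity of $G^{(i)}$ is proved by comparing positivity indices, and part (3) reduces to showing that a convex subgroup not contained in $G^{(2)}$ must contain $s_{1}$. The only real difference is cosmetic: in part (2) the paper argues directly that $h'^{-1}g$ is still $k(\mS)$-positive when $k<i$ (so $h'<_{D}g$, an immediate contradiction), which shortcuts your two-case analysis of $g^{-1}h'$.
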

\begin{proof}
Since $\mS$ has Property $A$, $\mS^{(i)}$ has Property $A$. Assume that $\mS^{(i)}$ does not have Property $C$, so there is an element $g \in G^{(i)} -\{1\}$ which is neither $\sigma(\mS^{(i)})$- positive nor $\sigma(\mS^{(i)})$-negative.
Assume that $1 <_{D} g$, so $g$ is represented by a $\sigma(\mS)$-positive word $W$. The case $1>_{D} g$ is similar.
Since $g \in G^{(i)}$ we may find a word representative $V$ of $g$ which consists of the alphabets in $\mS^{(i)} \cup {\mS^{(i)} }^{-1}$. Then $V^{-1}W$ is $\sigma(\mS)$-positive word which represents the trivial element, so this contradicts the fact that  $\mS$ has Property $A$. Thus, $\mS^{(i)}$ has Property $C$, hence $\mS^{(i)}$ defines a Dehornoy-like ordering $<_{D}^{(i)}$. 
Now the Property $A$ and Property $C$ of $\mS^{(i)}$ implies $\Sigma_{\mS^{(i)}} = \Sigma_{\mS} \cap G^{(i)}_{\mS}$, so  $<_{D}^{(i)}$ is equal to the restriction of $<_{D}$ to $G^{(i)}_{\mS}$.

Next we show $G^{(i)}$ is $<_{D}$-convex. Assume that $1<_{D} h <_{D} g$ hold for $g \in G^{(i)}$ and $h \in G$.
If $h$ is $j(\mS)$-positive for $j < i$, then $g^{-1}h$ is also $j(\mS)$-positive, so $g<_{D} h$. This contradicts the assumption, so $h$ must be $j(\mS)$-positive for $j\geq i$. This implies $h \in G^{(i)}$, so we conclude $G^{(i)}$ is $<_{D}$-convex. 

To show there are no $<_{G}$-convex subgroups other than $G^{(i)}$, it is sufficient to show if $H \supset G^{(2)}$ then $H = G^{(2)}$ or $G^{(1)} = G$.
Assume that $H \neq G^{(2)}$, hence $H$ contains an element $g$ in $G - G^{(2)}$. Let us take such $g$ so that $1<_{D} g$. Then $g$ must be $1(\mS)$-positive, hence we may write $g= h s_{1} P$ where $h \in G^{(2)}$ and $P>_{D} 1$. Then we have
\[ 1 <_{D} hs_{1} \leq_{D} hs_{1}P = g\]
 Since $H$ is convex, this implies $hs_{1} \in H$. Since $h \in G^{(2)} \subset H$, we conclude $s_{1} \in H$ hence $H=G$.
\end{proof}

From now on, we will always assume that $\mS$ has Property $F$, hence $\mA$ defines an isolated left ordering $<_{A}$. First of all we observe that $<_{A}$ also has the same properties as we have seen in Proposition \ref{prop:convexD}.

\begin{prop}
\label{prop:convexA}
Let $\mA=\{a_{1},\ldots,a_{n}\}$ be the twisted generating set of $\mS$ which defines an isolated left ordering $<_{A}$.
\begin{enumerate}
\item For $1 \leq i \leq n$, $\mA^{(i)}$ defines an isolated ordering $<_{A}^{(i)}$ of $G^{(i)}$. Moreover, the restriction of the isolated ordering $<_{A}$ to $G^{(i)}$ is equal to the isolated ordering $<_{A}^{(i)}$.
\item For $1\leq i \leq n$, the subgroup $G^{(i)}$ is $<_{A}$-convex.
In particular, $<_{A}$ is discrete, and the minimal $<_{A}$-positive element is $a_{n}$.
\item If $H$ is an $<_{A}$-convex subgroup of $G$, then $H=G^{(i)}$ for some $1 \leq i \leq n$.
\end{enumerate}
\end{prop}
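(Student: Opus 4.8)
The plan is to rerun the proof of Proposition~\ref{prop:convexD} one level deeper, using Theorem~\ref{thm:DOtoIO} to move between $\mS$ and $\mA$, and using Property~$F$ to absorb the one generator $a_1$ that does not already lie in $G^{(2)}$. For part~(1) the first point is that $\mS^{(i)}$ again has Property~$F$: the inclusions~\textbf{F} for $\mS^{(i)}$ are exactly those for $\mS$ with index $j\ge i$. Since $\mS^{(i)}$ also defines a Dehornoy-like ordering of $G^{(i)}$ by Proposition~\ref{prop:convexD}(1), Theorem~\ref{thm:DOtoIO} applies and shows that the twisted generating set $\mA^{(i)}=(\mA_{\mS})^{(i)}$ of $\mS^{(i)}$ defines an isolated ordering $<_{A}^{(i)}$ of $G^{(i)}$. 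To identify $<_{A}^{(i)}$ with the restriction of $<_{A}$, I would verify $P_{\mA^{(i)}}=P_{\mA}\cap G^{(i)}$: the inclusion $\subseteq$ is immediate, and for $\supseteq$ one uses that a nontrivial $g\in P_{\mA}\cap G^{(i)}$ is $<_{A}^{(i)}$-comparable, while $g^{-1}\in P_{\mA^{(i)}}\subseteq P_{\mA}$ is ruled out by $1\notin P_{\mA}$.

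For part~(2) I would first prove that $G^{(2)}$ is $<_{A}$-convex and then bootstrap. Since convexity depends only on the ordering, it is enough to show that every $h>_{A}1$ with $h\notin G^{(2)}$ dominates $G^{(2)}$, that is $g^{-1}h\in P_{\mA}$ for every $g\in P_{\mA}^{(2)}$ (the case $g\le_{A}1$ being trivial). This is where the $\mA$-word structure enters: since $h\notin G^{(2)}$, every $\mA$-positive word for $h$ must contain the letter $a_1$, so $h$ admits an expression $u_0a_1u_1\cdots a_1u_m$ with $m\ge 1$ and each $u_j$ a (possibly empty) $\mA^{(2)}$-positive word. Then $g^{-1}u_0\in G^{(2)}$ lies in $P_{\mA}^{(2)}\cup\{1\}$ or in $(P_{\mA}^{(2)})^{-1}$; in the first case $g^{-1}h$ is manifestly a product of elements of $P_{\mA}\cup\{1\}$, and in the second case, writing $g^{-1}u_0=p^{-1}$ with $p\in P_{\mA}^{(2)}$, Property~$F$ gives $p^{-1}a_1=a_1(a_1^{-1}p^{-1}a_1)\in a_1P_{\mA}$, so again $g^{-1}h\in P_{\mA}$. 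Once $G^{(2)}$ is known to be $<_{A}$-convex, applying the same statement to $\mS^{(i-1)}$ (which has Property~$F$ and defines a Dehornoy-like ordering) shows $G^{(i)}$ is $<_{A}^{(i-1)}$-convex inside $G^{(i-1)}$; transitivity of convexity together with part~(1) then gives that each $G^{(i)}$ is $<_{A}$-convex. Finally $G^{(n)}=\langle a_n\rangle$ is order-isomorphic to $(\Z,<)$ under $<_{A}^{(n)}$ with least positive element $a_n$, and its $<_{A}$-convexity promotes $a_n$ to the least positive element of $(G,<_{A})$, so $<_{A}$ is discrete.

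For part~(3) I would transcribe the proof of Proposition~\ref{prop:convexD}(3) with $(\mA,a_1)$ in place of $(\mS,s_1)$: if $H$ is a $<_{A}$-convex subgroup with $G^{(2)}\subseteq H\neq G^{(2)}$, pick $g\in H\setminus G^{(2)}$ with $g>_{A}1$ and write $g=u_0a_1u_1\cdots a_1u_m$ as above; then $1<_{A}u_0a_1\le_{A}g$ forces $u_0a_1\in H$, and $u_0\in G^{(2)}\subseteq H$ gives $a_1\in H$; since $H$ also contains $\mA^{(2)}=\{a_2,\ldots,a_n\}$ it contains the generating set $\mA$, so $H=G$. A general $<_{A}$-convex subgroup is then pinned down by restricting to each $G^{(i)}$ and using that convex subgroups form a chain, exactly as in Proposition~\ref{prop:convexD}.

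The only genuinely new ingredient should be the Property~$F$ bookkeeping in part~(2) --- pushing the $<_{A}^{(2)}$-negative factor $g^{-1}u_0$ across $a_1$ --- while the rest is a faithful copy of the $<_{D}$ argument, made available by Theorem~\ref{thm:DOtoIO}.
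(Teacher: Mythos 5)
Your proposal is correct and follows essentially the same route as the paper: parts (1) and (3) are transcriptions of the Dehornoy-like case, and the heart of (2) is exactly the paper's step of using Property $F$ to absorb an $<_{A}^{(j+1)}$-negative prefix across $a_{j}$. The only cosmetic difference is that you prove convexity of $G^{(2)}$ and then induct via transitivity of convexity, whereas the paper handles each $G^{(i)}$ directly by taking $j<i$ minimal in the expression $h=h'a_{j}w$.
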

\begin{proof}
The proofs of (1) and (3) are similar to the case of Dehornoy-like orderings.
To show (2), assume that $1<_{A} h <_{A} g$ hold for $g \in G^{(i)}$ and $h \in G$.
If $h \neq P_{\mA}^{(i)}$, then we may write $h$  as $h= h' a_{j} w$
where $h' \in P_{\mA}^{(j+1)}$ and $w \in P_{\mA}^{(j)} \cup \{1\}$ for $j<i$.
If $g^{-1}h' \in P_{\mA}^{(j+1)}$, then $g^{-1}h = (g^{-1}h') a_{j}w >_{A} 1 $.
If $g^{-1}h' \in (P_{\mA}^{(j+1)})^{-1}$, then by Property $F$, $(g^{-1}h')a_{j} >_{A} 1$ so $g^{-1}h = [(g^{-1}h') a_{j}]w >_{A} 1 $.
Therefore in both cases, $g^{-1}h>_{A} 1$, it is a contradiction. 

\end{proof}

To deduce more precise properties, we observe the following simple lemma.

\begin{lem}
\label{lem:key}
If $a_{n-1}a_{n}a_{n-1}\neq a_{n}$, then $a_{n-1}a_{n}^{-1}a_{n-1}^{-1}, a_{n-1}^{-1}a_{n}a_{n-1} \in P_{\mA}^{(n-1)} - P_{\mA}^{(n)}$.
\end{lem}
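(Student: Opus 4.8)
The plan is to read both of the elements in the statement as conjugates of $a_n^{-1}$, and to extract everything from Property $F$ together with the minimality statement in Proposition \ref{prop:convexA}. Since $a_n \in P_{\mA}^{(n)}$ we have $a_n^{-1} \in (P_{\mA}^{(n)})^{-1}$, so condition [F] with $i = n-1$ puts $a_{n-1}a_n^{-1}a_{n-1}^{-1}$ and $a_{n-1}^{-1}a_n^{-1}a_{n-1}$ into $P_{\mA}^{(n-1)}$ at once; hence the real content of the lemma is the non-membership in $P_{\mA}^{(n)}$. First I would record that, by Proposition \ref{prop:convexA}, $P_{\mA}^{(n)}$ is the positive cone of a left ordering of $G^{(n)} = \langle a_n\rangle$, so $1 \notin P_{\mA}^{(n)}$; in particular $a_n$ has infinite order and $P_{\mA}^{(n)} = \{\,a_n^{k} : k \geq 1\,\}$.

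The main step: suppose for contradiction that $a_{n-1}a_n^{-1}a_{n-1}^{-1} = a_n^{k}$ for some $k \geq 1$ (the case $a_{n-1}^{-1}a_n^{-1}a_{n-1} = a_n^{k}$ is entirely symmetric, conjugating by $a_{n-1}$ rather than by $a_{n-1}^{-1}$ below). Conjugating the relation by $a_{n-1}^{-1}$ gives $a_n^{-1} = (a_{n-1}^{-1}a_n a_{n-1})^{k}$, that is $v^{k} = a_n$ with $v := a_{n-1}^{-1}a_n^{-1}a_{n-1}$, and $v \in P_{\mA}^{(n-1)}$ by the previous paragraph, so $v >_A 1$. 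If $k \geq 2$, then $a_n = v\cdot v^{k-1}$ with $v^{k-1} >_A 1$, so $1 <_A v <_A a_n$, contradicting Proposition \ref{prop:convexA}(2), which says $a_n$ is the $<_A$-minimal positive element. Therefore $k = 1$ and $v = a_n$, i.e. $a_{n-1}^{-1}a_n a_{n-1} = a_n^{-1}$.

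Finally, $a_{n-1}^{-1}a_n a_{n-1} = a_n^{-1}$ says that $a_{n-1}$ conjugates $a_n$ to its inverse; multiplying on the left by $a_{n-1}$ and on the right by $a_n$ rewrites it as $a_n a_{n-1}a_n = a_{n-1}$, which the hypothesis forbids (equivalently, the hypothesis says $a_{n-1}$ does not invert $a_n$). The symmetric case yields the same relation, and this finishes the argument. The one place that needs care is the implication $k\geq 2 \Rightarrow$ contradiction: it rests entirely on $a_n$ being minimal among $<_A$-positive elements, so it is essential that Proposition \ref{prop:convexA} — in particular that each $\mA^{(i)}$ genuinely defines an isolated ordering of $G^{(i)}$ with $G^{(i)}$ convex — has already been established; the remaining manipulations are routine group algebra.
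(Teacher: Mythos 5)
Your proof follows the paper's own argument step for step: Property $F$ at level $i=n-1$ gives membership in $P_{\mA}^{(n-1)}$, and non-membership in $P_{\mA}^{(n)}=\{a_{n}^{k}\mid k\geq 1\}$ is obtained by supposing equality with $a_{n}^{k}$, producing a $<_{A}$-positive $k$-th root of $a_{n}$ inside $P_{\mA}^{(n-1)}$ to contradict the minimality of $a_{n}$ from Proposition \ref{prop:convexA} when $k\geq 2$, and reducing $k=1$ to an excluded relation. (You also silently read the second element as $a_{n-1}^{-1}a_{n}^{-1}a_{n-1}$ rather than the printed $a_{n-1}^{-1}a_{n}a_{n-1}$; that is the element the paper's proof and the later applications in Theorem \ref{thm:converge} and the Conradian soul computation actually use, so this is the right reading of an evident typo.)

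The step that does not survive scrutiny is the last one. What $k=1$ forces is $a_{n}a_{n-1}a_{n}=a_{n-1}$, equivalently $a_{n-1}^{-1}a_{n}a_{n-1}=a_{n}^{-1}$ (``$a_{n-1}$ inverts $a_{n}$''), whereas the printed hypothesis excludes $a_{n-1}a_{n}a_{n-1}=a_{n}$, equivalently $a_{n}^{-1}a_{n-1}a_{n}=a_{n-1}^{-1}$ (``$a_{n}$ inverts $a_{n-1}$''). These are not equivalent, and your parenthetical gloss of the hypothesis is exactly backwards. The Klein bottle group $\langle a_{1},a_{2}\mid a_{2}a_{1}a_{2}=a_{1}\rangle$ (the case $n=2$ discussed right after Theorem \ref{thm:converge}, which satisfies all the standing assumptions including Property $F$) separates the two conditions: there $a_{2}a_{1}a_{2}=a_{1}$ holds while $a_{1}a_{2}a_{1}\neq a_{2}$, so the printed hypothesis is satisfied and yet $a_{1}a_{2}^{-1}a_{1}^{-1}=a_{2}\in P_{\mA}^{(2)}$, i.e.\ the conclusion fails. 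So no argument can close this gap for the statement exactly as printed; the hypothesis is itself a typo for $a_{n}a_{n-1}a_{n}\neq a_{n-1}$, which is the condition the paper's own proof invokes (its ``$qpq\neq q$'' being a further typo for $qpq\neq p$) and which is precisely what fails for the Klein bottle group. With the hypothesis so corrected your argument is complete and coincides with the paper's; the fix is to state the condition you actually need rather than assert a false equivalence with the printed one.
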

\begin{proof}
To make notation simple, we put $p=a_{n-1}$ and $q=a_{n}$.

By Property $F$, $pq^{-1}p^{-1}, p^{-1}q^{-1}p \in P_{\mA}^{(n-1)}$.
We show $pq^{-1}p^{-1} \neq P_{\mA}^{(n)}$. The proof of $p^{-1}q^{-1}p \neq P_{\mA}^{(n)}$ is similar. Assume that $pq^{-1}p^{-1}=q^{k}$ for $k>0$. Since we have assumed that $qpq \neq q$, $k>1$.
Then
\[ q= p^{-1}(pq^{-1}p^{-1})^{-1}p = p^{-1} q^{-k} p = (p^{-1}q^{-1}p)^{k},\]
so we have $1 <_{A} p^{-1}q^{-1}p <_{A} q$. This contradicts Proposition \ref{prop:convexA} (2), the fact that $q=a_{n}$ is the minimal $<_{A}$-positive element. 
\end{proof}

Next we show that in most cases the Dehornoy-like ordering $<_{D}$ is not isolated, so it makes a contrast to the isolated ordering $<_{A}$. 
For $g \in G$ and a left ordering $<$ of $G$ whose positive cone is $P$, we define $<_{g} = < \cdot \,g$ as the left ordering defined by the positive cone $P\cdot g$. Thus, $x <_{g} x'$ if and only if $xg < x'g$. This defines a right action of $G$ on $\LO(G)$.
Two left orderings are said to be {\em conjugate} if they belong to the same $G$-orbit.

\begin{thm}
\label{thm:converge}
If $a_{n-1}a_{n}a_{n-1}\neq a_{n}$, then the Dehornoy-like ordering $<_{D}$ is an accumulation point of the set of its conjugates $\{ <_{D} \cdot \,g \}_{g \in G}$. Thus, $<_{D}$ is not isolated in $\LO(G)$, and the $\sigma(\mS)$-positive monoid is not finitely generated. 
\end{thm}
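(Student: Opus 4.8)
The plan is to produce a sequence of left orderings $<_D \cdot\, g_k$ that converges to $<_D$ but is eventually distinct from it, using the element $p = a_{n-1}$ (or a suitable power of it) as the conjugating sequence. The guiding intuition is Proposition \ref{prop:convexD}(2): the convex subgroup $G^{(n)} = \langle s_n\rangle$ is the $<_D$-minimal nontrivial convex subgroup, so conjugating by larger and larger elements of $G^{(n-1)}$ should ``zoom in'' near the identity while leaving the global structure fixed, and the obstruction to the limit being $<_D$ itself is exactly the failure detected by Lemma \ref{lem:key}.

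First I would set $p = a_{n-1}$, $q = a_n$ and consider the orderings $<_k \;=\; <_D \cdot\, p^{k}$ for $k \to \infty$ (replacing $p$ by $p^{-1}$ if necessary so that $1 <_D p$). To show $<_k \to <_D$ in $\LO(G)$, I need: for every fixed nontrivial $g \in G$, for all sufficiently large $k$, the sign of $g$ with respect to $<_k$ agrees with its sign with respect to $<_D$, i.e. $g \in \Sigma_{\mS}$ iff $p^{-k} g p^{k} \in \Sigma_{\mS}$ for $k \gg 0$. For $g \in G^{(2)}$ this is immediate since $p \in G^{(2)}$ and conjugation by $p$ preserves $\Sigma_{\mS^{(2)}} = \Sigma_{\mS} \cap G^{(2)}$ by Proposition \ref{prop:convexD}(1). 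For $g \notin G^{(2)}$, so $g$ is $1(\mS)$-positive or $1(\mS)$-negative, one writes $g$ in the $\mA$ generators as in the proof of Claim \ref{claim:1}, $g = V_0 a_1 V_1 \cdots a_1 V_m$ with $V_i \in G^{(2)}$, and argues that conjugation by $p^{k}$ (with $k$ large) pushes the ``correction'' so that $p^{-k} g p^{k}$ lies in $P_{\mA} \subset \Sigma_{\mS}$; the key point is that since $g \neq 1$, its class modulo $G^{(2)}$ is a nontrivial word in $a_1$, whose sign is robust — this is where the Property $F$ rewriting from Claim \ref{claim:1} is reused, now uniformly in $k$.

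Then I would show $<_k \neq <_D$ for large $k$, equivalently that infinitely many of the $<_k$ are distinct from $<_D$, which forces $<_D$ to be an accumulation point of $\{<_D \cdot\, g\}_{g \in G}$ rather than an interior point of that orbit. Here Lemma \ref{lem:key} is the crux: $p^{-1} q^{-1} p \in P_{\mA}^{(n-1)} \smallsetminus P_{\mA}^{(n)}$, so $p^{-1} q^{-1} p$ is $<_D$-negative (it is $(n-1)(\mS)$-negative after untwisting, or directly: it lies in $\Sigma_{\mS}^{-1}$ but not in $\langle s_n\rangle$ on the positive side) while $q^{-1}$ is also $<_D$-negative but is the $<_D$-maximal negative element of $G^{(n)}$; comparing $q^{-1}$ against $p^{-k} q^{-1} p^{k}$ and tracking when the conjugate escapes $G^{(n)}$ shows the sign of a suitable test element flips between $<_D$ and $<_k$. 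Concretely, by iterating the computation in Lemma \ref{lem:key} one gets $p^{-k} q p^{k} \notin G^{(n)}$ for $k \geq 1$, so the element $g_0 = q \cdot (p^{-1} q^{-1} p)$, which is $<_D$-negative but $<_D$-close to $1$, becomes $<_k$-positive for appropriate $k$; hence $<_k \neq <_D$.

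Finally, once $<_D$ is shown to be an accumulation point of its own conjugacy orbit, it is in particular not isolated in $\LO(G)$, since any neighborhood contains infinitely many conjugates $\neq\; <_D$. The characterization recalled in the introduction — an ordering is isolated iff its positive cone is finitely generated as a submonoid — then immediately gives that $\Sigma_{\mS}$ is not finitely generated. The main obstacle I anticipate is the convergence step for $1(\mS)$-positive elements: making the rewriting of $p^{-k} g p^{k}$ into $P_{\mA}$ genuinely uniform in $k$, i.e. controlling how many $a_1$-syllables appear and ensuring the Property $F$ absorption $a_1 V_i \subset P_{\mA} a_1$, $V_i a_1 \subset a_1 P_{\mA}$ can be applied after conjugation without the bound on $k$ depending on $g$ in a way that breaks the limit; this likely requires observing that conjugation by $p$ fixes the $G/G^{(2)}$-image and only alters the $G^{(2)}$-parts $V_i$ in a controlled (sign-preserving for large $k$) manner, which I would extract from Proposition \ref{prop:convexD} applied to $G^{(2)}$ together with the fact that $p$ is $<_D$-positive.
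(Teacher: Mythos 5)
There is a genuine gap, and it sits exactly where you flag your ``main obstacle.'' You conjugate by powers $p^{k}$ of $p=a_{n-1}$ alone, and for convergence you must show that every fixed $<_{D}$-positive element of $G^{(n-1)}$ (in particular every positive power of the minimal element, and every $(n-1)(\mS)$-positive element) stays $<_{D}$-positive after conjugation by $p^{k}$ for all large $k$. Your justification for elements of $G^{(2)}$ --- that conjugation by $p$ preserves $\Sigma_{\mS}\cap G^{(2)}$ ``by Proposition \ref{prop:convexD}(1)'' --- is not what that proposition says and is false in general: it only identifies the restriction of $<_{D}$ to $G^{(2)}$ with the Dehornoy-like ordering of $G^{(2)}$; positive cones of left orderings are not conjugation-invariant. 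Indeed, if that invariance held (note $p\in G^{(2)}$ when $n\geq 3$), then combined with your correct remark that conjugation by an element of $G^{(2)}$ cannot change the sign of an element outside $G^{(2)}$, you would get $<_{D}\cdot\,p^{k}\;=\;<_{D}$ for all $k$, destroying the non-triviality you need; and for $n=2$ the case $g\in G^{(2)}$ is precisely conjugation of powers of $s_{2}$ by $a_{1}$, which is the whole content of Lemma \ref{lem:key}, not an immediate fact. Moreover, with conjugators $p^{k}$ the available tools control only a single conjugation: Property $F$ gives $p^{\pm 1}q^{-j}p^{\mp 1}\in P_{\mA}^{(n-1)}$, but after one such step the element is no longer a power of $q$, so the estimate cannot be iterated to handle $p^{-k}q^{-j}p^{k}$, and your proposed ``iteration'' of Lemma \ref{lem:key} to conclude $p^{-k}qp^{k}\notin G^{(n)}$ is unjustified; the same applies to the test element $g_{0}=q(p^{-1}q^{-1}p)$ in your non-triviality step (which, incidentally, is $<_{D}$-positive, not negative, since $p^{-1}q^{-1}p$ is $(n-1)(\mS)$-positive and absorbs the power of $s_{n}$). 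So neither the convergence nor the non-triviality of your sequence is actually established.

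The missing idea is the choice of conjugating elements. The paper takes $<_{k}\;=\;<_{D}\cdot\,(q^{k}p)$ with $q=a_{n}$, $p=a_{n-1}$: the growth is in the $q$-direction and only one $p$ ever occurs, so a single application of Property $F$ and of Lemma \ref{lem:key} suffices. Concretely, for a $<_{D}$-positive $c$: if $c\notin G^{(n-1)}$ its sign is unaffected; if $c\in G^{(n-1)}\setminus G^{(n)}$, write $c=q^{m}pw$ with $w\in\Sigma_{\mS}^{(n-1)}$, and for $k>m$ one has $(q^{k}p)^{-1}c(q^{k}p)=(p^{-1}q^{m-k}p)\,w\,q^{k}p$, which is $(n-1)(\mS)$-positive by one application of Property $F$; if $c\in G^{(n)}$, the conjugate equals $p^{-1}cp$, independent of $k$, and Lemma \ref{lem:key} shows it is $(n-1)(\mS)$-positive. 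Non-triviality is then read off at once from discreteness: the $<_{k}$-minimal positive element is the conjugate of the $<_{D}$-minimal one, and the hypothesis $a_{n-1}a_{n}a_{n-1}\neq a_{n}$ guarantees these differ, so $<_{k}\;\neq\;<_{D}$. Your final deductions (accumulation point of conjugates $\Rightarrow$ not isolated $\Rightarrow$ $\Sigma_{\mS}$ not finitely generated) agree with the paper and are fine.
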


\begin{proof}

Our argument is generalization of Navas-Wiest's criterion \cite{nw}.
As in the proof of Lemma \ref{lem:key}, we put $p=a_{n-1}$ and $q=a_{n}$ to make notation simple.

We construct a sequence of left orderings $\{<_{n}\}$ so that $\{<_{n}\}$ non-trivially converge to $<_{D}$ and that each $<_{n}$ is conjugate to $<_{D}$. Here the word non-trivially means that $<_{n} \neq <_{D}$ for sufficiently large $n>0$.

Let $<_{n} = <_{D} \cdot \;(q^{n}p)$. Thus, $1<_{n} g$ if and only if $1<_{D}(q^{n}p)^{-1} g(q^{n}p)$. First we show the orderings $<_{n}$ converge to $<_{D}$ for $n \rightarrow \infty$.
By definition of the topology of $\LO(G)$, it is sufficient to show that for an arbitrary finite set of $<_{D}$-positive elements $c_{1},\ldots,c_{r}$, $1 <_{N} c_{i} $ holds for sufficiently large $N>0$.

If $c_{i} \not \in G^{(n-1)}$, then $1 <_{n} c_{i}$ for all $n$. Thus, we assume $c_{i} \in G^{(n-1)}$.

First we consider the case $c_{i} \not \in G^{(n)}$. Then $c_{i}$ is $(n-1)(\mS)$-positive, hence by using generators $\{p,q\}$, $c_{i}$ is written as $ c_{i}= q^{m} p w$ where $w \in \Sigma_{\mS}^{(n-1)}$ and $m \in \Z$.
For $k>m$, by Property $F$ $p^{-1}q^{m-k}p \in P_{\mA}^{(n-1)}$. So, if we take $k>m$, then  
\[ (q^{k}p)^{-1} c_{i} (q^{k}p) = (p^{-1}q^{m-k}p) w q^{k}p \]
is $(n-1)(\mS)$-positive. So $1<_{k} c_{i}$ for $k>m$.

Next assume that $c_{i} \in G^{(n)}$, so $c_{i}= q^{-m}$ $(m>0)$. Then 
$(q^{k}p)^{-1} c_{i} (q^{k}p) = p^{-1}q^{-m}p$. By Lemma \ref{lem:key}, $p^{-1}q^{-m}p \in P_{\mA}^{(n-1)} - P_{\mA}^{(n)}$, so $p^{-1}q^{-m}p$ is $(n-1)(\mS)$-positive. Therefore $1 <_{D} p^{-1}q^{-m}p $, hence $1<_{k} c_{i}$ for all $k>1$.

To show the convergent sequence $\{<_{n}\}$ is non-trivial, we observe that the minimal positive element of the ordering $P_{n}$ is $(q^{k}p)^{-1} q (q^{k}p) = p^{-1}qp$. From the assumption, $p^{-1}qp$ is not identical with $q$, the minimal positive element of the ordering $<_{D}$. Thus, $<_{n}$ are different from the ordering $<_{D}$.
\end{proof}

It should be mentioned that our hypothesis $a_{n-1}a_{n}a_{n-1}\neq a_{n}$ is really needed.
Let us consider the Klein bottle group $G = \langle s_{1},s_{2} \: | \: s_{2}s_{1}s_{2}=s_{1} \rangle$. It is known that $\mS=\{s_{1},s_{2}\}$ defines a Dehornoy-like ordering $<_{D}$ of $G$ (See \cite{n2} or the proof of Theorem \ref{thm:order} in Section 3.1, which is valid for the Klein bottle group case, $(m,n)=(2,2)$).
However, since $G$ has only finitely many left orderings, $<_{D}$ must be isolated. Observe that for the twisted generating set $\mA=\{a_{1},a_{2}\}$ of $\mS$, the Klein bottle group has the same presentation $G = \langle a_{1},a_{2} \: | \: a_{2}a_{1}a_{2}=a_{1} \rangle$.  

Finally we determine the Conradian soul of $<_{D}$ and $<_{A}$.

\begin{thm}[Conradian properties of Dehornoy-like and isolated orderings]
Let $\mS=\{s_{1},\ldots,s_{n}\} (n>1)$ be an ordered finite generating set of a group $G$ which defines a Dehornoy-like ordering $<_{D}$. Assume that $\mS$ has Property $F$ and let $\mA=\{a_{1},\ldots,a_{n}\}$ be the twisted generating set of $\mS$. Let $<_{A}$ be the isolated ordering defined by $\mA$.
If $a_{n-1}a_{n}a_{n-1}\neq a_{n}$, then two orderings $<_{D}$ and $<_{A}$ have the following properties. 
\begin{enumerate}
\item $<_{D}$ is not Conradian. Thus, the $<_{D}$-Conradian soul is $G^{(n)}$, the infinite cyclic subgroup generated by $s_{n}$.
\item $<_{A}$ is not Conradian. Thus, the $<_{A}$-Conradian soul is $G^{(n)}$, the infinite cyclic subgroup generated by $a_{n}$.
\end{enumerate}
\end{thm}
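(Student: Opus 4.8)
The plan is to extract the Conradian soul directly from the classification of convex subgroups (Propositions~\ref{prop:convexD} and \ref{prop:convexA}) together with a single non-normality observation furnished by Lemma~\ref{lem:key}. Throughout put $p=a_{n-1}$ and $q=a_{n}$, so that $G^{(n-1)}=\langle p,q\rangle$ and $G^{(n)}=\langle q\rangle$. The crucial group-theoretic fact is that \emph{$G^{(n)}$ is not normal in $G^{(n-1)}$}. Indeed, Lemma~\ref{lem:key} gives $pq^{-1}p^{-1}=a_{n-1}a_{n}^{-1}a_{n-1}^{-1}\in P_{\mA}^{(n-1)}-P_{\mA}^{(n)}$; since $P_{\mA}^{(n)}$ consists exactly of the non-negative powers of $q$, while $P_{\mA}^{(n-1)}$ contains no $<_{A}$-negative element, no power of $q$ can equal $pq^{-1}p^{-1}$, whence $pq^{-1}p^{-1}\notin\langle q\rangle$ and so $p\,G^{(n)}\,p^{-1}\neq G^{(n)}$. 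In particular $p\notin\langle q\rangle$, so $G^{(n)}\subsetneq G^{(n-1)}$; combined with the fact (Propositions~\ref{prop:convexD}(3), \ref{prop:convexA}(3)) that the only convex subgroups of $(G,<_{D})$ and of $(G,<_{A})$ are $G=G^{(1)}\supsetneq\cdots\supsetneq G^{(n)}\supsetneq\{1\}$, this shows that $G^{(n)}\subset G^{(n-1)}$ is a \emph{covering} pair of convex subgroups of each of the two orderings which fails to be a normal pair.

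Next I would invoke the structure theory of Conradian orderings: in any Conradian left ordering every covering pair $C\subset C'$ of convex subgroups is a normal pair, with $C'/C$ order-isomorphic to a subgroup of $(\R,+)$ (see e.g.\ \cite{n1}). Since the covering pair $G^{(n)}\subset G^{(n-1)}$ is not a normal pair, it follows immediately that neither $<_{D}$ nor $<_{A}$ is Conradian, which is the first assertion in both (1) and (2). For the Conradian souls, recall that the soul is itself a convex subgroup, hence equals some $G^{(i)}$. A convex subgroup of a convex subgroup is convex, so the convex subgroups of $G^{(i)}$ with the restricted ordering are exactly the $G^{(j)}$ with $j\geq i$; hence for every $i\leq n-1$ the non-normal covering pair $G^{(n)}\subset G^{(n-1)}$ still occurs inside $G^{(i)}$, and therefore the restriction of $<_{D}$ (resp.\ $<_{A}$) to $G^{(i)}$ is not Conradian for any $i\leq n-1$. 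On the other hand the restriction to $G^{(n)}\cong\Z$ is an ordering of an abelian group, hence Conradian. Consequently the $<_{D}$-Conradian soul is $G^{(n)}=\langle s_{n}\rangle$ and the $<_{A}$-Conradian soul is $G^{(n)}=\langle a_{n}\rangle$ (these coincide since $a_{n}=s_{n}^{-1}$).

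I do not expect a genuine obstacle here: everything reduces to the non-normality of $G^{(n)}$ in $G^{(n-1)}$, which is exactly what Lemma~\ref{lem:key} — and hence Property~$F$ together with the hypothesis $a_{n-1}a_{n}a_{n-1}\neq a_{n}$ — was designed to deliver. The only points needing a little care are to use the correct direction of Conrad's structure theorem (that a covering pair of convex subgroups failing to be normal already obstructs the Conradian property, so one does not need to identify the quotient $G^{(n-1)}/G^{(n)}$, which may not even be defined as a group) and to verify the easy bookkeeping that this offending covering pair survives in each convex subgroup $G^{(i)}$ with $i\leq n-1$.
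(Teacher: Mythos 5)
Your proof is correct, but it takes a genuinely different route from the paper's. The paper proves non-Conradianity ``by hand'': using Lemma \ref{lem:key} it writes $p^{-1}qp = Np^{-1}q^{-k}$ with $N\leq_{A}1$ and then exhibits explicit $<_{A}$-positive (resp.\ $<_{D}$-positive) elements $f,g$ with $g^{-1}fg^{2}\leq 1$, directly violating the defining inequality of a Conradian order; the identification of the soul then follows from Propositions \ref{prop:convexD} and \ref{prop:convexA} exactly as in your argument. You instead extract from Lemma \ref{lem:key} the purely group-theoretic fact that $pq^{-1}p^{-1}\notin\langle q\rangle$ (your positivity argument ruling out $pq^{-1}p^{-1}=q^{k}$ for $k\leq 0$ as well as $k\geq 1$ is sound), hence that the convex jump $G^{(n)}\subset G^{(n-1)}$ is not a normal pair, and then invoke Conrad's structure theorem (every convex jump of a Conradian order is normal with Archimedean quotient, as in \cite{n1}). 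Since the same chain of convex subgroups occurs in both orderings and survives in each $G^{(i)}$ with $i\leq n-1$, this settles both statements at once. Your approach is shorter and more conceptual, and it makes transparent that a single group-theoretic obstruction kills the Conradian property for both $<_{D}$ and $<_{A}$ simultaneously; the cost is reliance on the external structure theorem, whereas the paper's computation is self-contained. Two small points of care: $P_{\mA}^{(n)}$ is the set of \emph{strictly} positive powers of $q$ (positive words are nonempty), which is what your case analysis actually uses; and the existence of a maximal convex subgroup with Conradian restriction is unproblematic here only because the convex subgroups form a finite chain, which you implicitly use.
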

\begin{proof}
As in the proof of Lemma \ref{lem:key}, we put $p=a_{n-1}$ and $q=a_{n}$. To prove theorem it is sufficient to show for $n>2$, $<_{D}$ and $<_{A}$ are not Conradian, since by Proposition \ref{prop:convexD} and Proposition \ref{prop:convexA} if $H$ is a $<_{D}$- or $<_{A}$- convex subgroup, then $H=G^{(i)}$ for some $i$.
First we show $<_{A}$ is not Conradian.
By Lemma \ref{lem:key}, every $\mA$-word positive representative of $p^{-1}q^{-1}p$ contains at least one $p$, so we put $p^{-1}qp=N p^{-1}q^{-k}$ where $N \leq_{A} 1$ and $k\geq 0$.
Then we obtain an inequality
\[ (q^{k}p)^{-1}q(q^{k}p)^{2} = (p^{-1} q p) q^{k} p = N \leq_{A} 1 \]
hence $<_{A}$ is not Conradian. 

To show $<_{D}$ is not Conradian, we observe 
\begin{eqnarray*}
 (pq^{k+1}pq^{k+1})^{-1}(pq^{k+2})(pq^{k+1}pq^{k+1})^{2}& = & q^{-(k+1)}(p^{-1}qp) q^{k+1}pq^{k+1}pq^{k+1}pq^{k+1}  \\
 & = & q^{-(k+1)}N (p^{-1}q)pq^{k+1}pq^{k+1}pq^{k+1} \\
 & = & \cdots \\
 & = &q^{-(k+1)}N^{4}p^{-1}q\\
 & <_{D} & 1.
 \end{eqnarray*}
\end{proof}

\begin{ques}
In our study of Dehornoy-like ordering, we assumed somewhat artificial conditions, such as Property $F$ or $a_{n-1}a_{n}a_{n-1}\neq a_{n}$. Are such assumptions really needed ? that is,  
\begin{enumerate}
\item {\em If $\mS$ defines a Dehornoy like ordering of $G$, then does $\mS$ have Property $F$ ?}
\item {\em If $G$ has infinitely many left orderings and $\mS$ defines a Dehornoy-like ordering, then does $a_{n-1}a_{n}a_{n-1}\neq a_{n}$ hold for $a_{n-1},a_{n}$ in the twisted generating set $\mA$ of $\mS$ ?}
\end{enumerate}
It is likely that the above questions have affirmative answers, hence the relationships between Dehornoy-like orderings and isolated orderings are quite stronger than stated in this paper.
\end{ques}

\section{Isolated and Dehornoy-like ordering on $\Z *_{\Z} \Z$}

 In this section we construct explicit examples of Dehornoy-like and isolated left orderings of the group $\Z *_{\Z} \Z$ and study more detailed properties. 
 
\subsection{Construction of orderings}

First we review the notations.
Let $G=\Z * _{\Z} \Z$ be the amalgamated free product of two infinite cyclic groups.
As we mentioned such a groups are presented as 
\[ G_{m,n} = \langle x,y \: | \: x^{m}=y^{n} \rangle. \;\;\;\;\;(m \geq n)\] 
and we will always assume $(m,n) \neq (2,2)$.

We consider an ordered generating set $\mS=\{s_{1}=xyx^{-m+1}, s_{2}=x^{m-1}y^{-1}\}$ and its twisted generating set  $\mA =\{a = x,b = yx^{-m+1}\}$. Using $\mS$, $\mA$, the group $G_{m,n}$ is presented as 
\begin{eqnarray*}
 G_{m,n} &  = & \langle s_{1},s_{2} \: | \: s_{2}s_{1}s_{2}=
( (s_{1}s_{2})^{m-2}s_{1})^{n-1} \rangle \\
 & = & \langle a,b \: | \: (ba^{m-1})^{n-1}b=a \rangle
\end{eqnarray*}
respectively. In this section we present a new family of Dehornoy-like and isolated left orderings.

\begin{thm}
Let $\mS, \mA$ be the ordered finite generating sets of $G_{m,n}$ as above. 
\label{thm:order}
\begin{enumerate}
\item $\mS$ defines a Dehornoy-like ordering $<_{D}$.
\item $\mA$ defines an isolated left ordering $<_{A}$.
\end{enumerate}
\end{thm}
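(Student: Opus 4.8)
The plan is to prove the two statements together, using the machinery of Section~2 to reduce everything to a single pair of assertions about the amalgamated free product $G_{m,n} = \Z *_{\Z} \Z$. By Theorem~\ref{thm:DOtoIO}, once we verify that $\mS$ has Property $F$, statements (1) and (2) become equivalent: $\mS$ defines a Dehornoy-like ordering if and only if $\mA$ defines an isolated ordering. So the work splits into two parts: first, checking Property $F$ for $\mS$; second, establishing Property $A$ and Property $C$ for $\mS$ (or equivalently, by Claims~\ref{claim:1} and~\ref{claim:2} in the proof of Theorem~\ref{thm:DOtoIO}, verifying that $P_{\mA}\cup P_{\mA}^{-1}\cup\{1\} = G_{m,n}$ and $1\notin P_{\mA}$). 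Since $n=2$ here and $\mA = \{a,b\}$ with $a_{1}=a$, $a_{2}=b$, Property $F$ amounts to the two inclusions $a\,(P_{\mA}^{(2)})^{-1}\,a^{-1}\subset P_{\mA}^{(1)} = P_{\mA}$ and $a^{-1}(P_{\mA}^{(2)})^{-1} a\subset P_{\mA}$, where $P_{\mA}^{(2)}$ is the submonoid generated by $b$ alone, i.e. the positive powers of $b$. So Property $F$ reduces to showing $a b^{-k} a^{-1}\in P_{\mA}$ and $a^{-1} b^{-k} a\in P_{\mA}$ for all $k\geq 1$; this I would verify directly from the relation $(ba^{m-1})^{n-1}b = a$, rewriting $a^{\pm 1}$ in terms of $b$ and $a^{m-1}$ and pushing the negative powers of $b$ through.

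For the core part, I would use the Bass–Serre / amalgamated-product structure of $G_{m,n} = \langle x\rangle *_{x^m = y^n} \langle y\rangle$: every nontrivial element has a normal form as an alternating product of coset representatives of $\langle x^m\rangle$ in $\langle x\rangle$ and of $\langle y^n\rangle$ in $\langle y\rangle$, together with a central power of $x^m = y^n$. I would translate the generators $a = x$, $b = yx^{-m+1}$ and $s_1 = xyx^{-m+1}$, $s_2 = x^{m-1}y^{-1}$ into this normal form, and then argue by induction on syllable length that every element is either $\mA$-word-positive or $\mA$-word-negative — this gives Property $C$ — and that no nonempty $\mA$-positive word collapses to $1$ — this gives Property $A$ (equivalently $1\notin P_{\mA}$). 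The key algebraic facts to extract are: (a) $b = yx^{-m+1}$ has infinite order and generates (with $a$) the whole group, (b) the relation lets one move a syllable past $b$ at the cost of introducing controlled positive material, and (c) the central element $x^m$ is itself $\mA$-word-positive (one should check, e.g., that $a^m$ or an equivalent word is positive), so that the central direction is consistent with positivity.

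I expect the main obstacle to be Property $A$ — i.e. showing $1\notin P_{\mA}$, equivalently that no nonempty positive word in $a,b$ equals the identity. Property $C$ is essentially a normal-form bookkeeping argument, and Property $F$ is a short computation, but acyclicity typically requires either an explicit action of $G_{m,n}$ on the real line (or on the Bass–Serre tree together with an ordering of the edge/vertex stabilizers) witnessing that every $\mA$-positive word acts as a "positive" homeomorphism, or a careful van-Kampen-style argument using the structure of the single defining relator $(ba^{m-1})^{n-1}b a^{-1}$. The cleanest route is probably to produce a faithful action of $G_{m,n}$ by orientation-preserving homeomorphisms of $\R$ in which $a$ and $b$ both act without fixed points pushing points in the positive direction on a suitable fundamental domain — this is the approach foreshadowed by Theorem~\ref{thm:dynamics} and the remark that $\{G_{m,2}\}$ are central extensions of Hecke groups, so Navas' dynamical picture in \cite{n2} should generalize. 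Once such an action is in hand, $1\notin P_{\mA}$ is immediate, Property $C$ follows from comparing the action of an arbitrary element against the identity, and then Theorem~\ref{thm:DOtoIO} closes the argument for both (1) and (2).
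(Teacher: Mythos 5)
Your overall architecture coincides with the paper's: verify Property $F$ directly from the relation, invoke Theorem \ref{thm:DOtoIO} so that (1) and (2) become equivalent, and reduce everything to the two assertions $1\notin P_{\mA}$ and $P_{\mA}\cup P_{\mA}^{-1}\cup\{1\}=G_{m,n}$; your syllable-length induction for the latter is in the same spirit as the paper's Lemma \ref{lem:B} (though the real work there --- the minimal-length normal form $a^{mM}a^{s_k}b^{r_k}\cdots a^{s_1}b^{r_1}$, the rewriting identity $a^{-1}b^{r}=[(a^{m}b)^{-(n-1)}b^{-1}a^{-m+1}]^{r-1}(a^{m}b)^{-n}$, and the exceptional case where the minimality of $k$ is contradicted --- is left entirely unexecuted). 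The genuine gap is in your ``cleanest route'' to Property $A$: there is no action of $G_{m,n}$ on $\R$ in which both $a$ and $b$ act as fixed-point-free increasing homeomorphisms. Writing the relation as $a=(ba^{m-1})^{n-2}ba^{m-2}\cdot a\cdot b$, if $a(t)>t$ and $b(t)>t$ for all $t$, then the right-hand side applied to any $t$ is strictly greater than $a(t)$, a contradiction; equivalently, every homomorphism $G_{m,n}\to\R$ gives $a$ and $b$ opposite signs (in $B_{3}=G_{3,2}$ one has $a=\sigma_{1}\sigma_{2}$ with exponent sum $2$ but $b=yx^{-2}$ with exponent sum $-1$). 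Consistently, in the paper's own standard action (Theorem \ref{thm:dynamics}) the element $b=s_{2}^{-1}$ is monotone non-increasing, the opposite of what you want. Your hedge ``on a suitable fundamental domain'' does not rescue the step: once the positivity of $a,b$ holds only on part of the line, positive words can leave that region, and the conclusion ``$1\notin P_{\mA}$ is immediate'' evaporates.

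What the paper actually does for Property $A$ (Lemma \ref{lem:A}) is a ping-pong argument using the non-faithful action of the quotient $Z_{m,n}=\Z_{m}*\Z_{n}$ on $S^{1}$ coming from the Bass--Serre tree: after using the relation to remove subwords of the form $(ba^{m-1})^{n-1}b$, one tracks how the image of an $\mA$-positive word moves the intervals $[p_{i},p_{i+1}]$ and $[q_{i},q_{i+1}]$ and concludes it acts nontrivially, while the central elements $a^{mN}$, which act trivially on the circle, are handled separately. Your proposal contains neither this interval bookkeeping nor any acknowledgement that the natural action kills the center, so no dynamical statement about $a$ and $b$ alone can make acyclicity ``immediate.'' A second, smaller overreach: Property $C$ does not ``follow from comparing the action of an arbitrary element against the identity''; dynamical positivity of $g$ does not produce an $\mA$-positive (or $\sigma(\mS)$-positive) word representing $g$, and producing such a word is exactly the combinatorial content of Lemma \ref{lem:B}. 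The Property $F$ computation and the reduction through Theorem \ref{thm:DOtoIO} in your proposal are fine.
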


By the presentation of $G$, it is easy to see that $\mS$ have Property $F$ and $bab\neq b$ if $(m,n) \neq (2,2)$. Thus from general theories developed in Section 2, we obtain various properties of $<_{D}$ and $<_{A}$.

\begin{cor}
Let $<_{D}$ be the Dehornoy-like ordering of $G=G_{m,n}$ and $<_{A}$ be the isolated ordering in Theorem \ref{thm:order}.
\begin{enumerate}
\item If $H$ is a $<_{D}$-convex subgroup, then $H =\{1\}$ or $\langle s_{2} \rangle$ or $G$.
\item If $H$ is a $<_{A}$-convex subgroup, then $H =\{1\}$ or $\langle b \rangle$ or $G$.
\item The Conradian soul of $<_{D}$ is $G^{(2)} = \langle s_{2} \rangle$.
\item The Conradian soul of $<_{A}$ is $G^{(2)} = \langle b \rangle$.
\item $<_{D}$ is an accumulation point of the set of its conjugate $\{ <_{D} \cdot \:g \}_{g \in G}$. Thus, $<_{D}$ is not isolated in $\LO(G)$ and the $\sigma(\mS)$-positive monoid is not finitely generated. 
\end{enumerate}
\end{cor}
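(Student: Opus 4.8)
The corollary simply collects, for $G=G_{m,n}$, consequences of the general results of Section~2, so the plan has two parts. First I would verify the two structural hypotheses those results impose on $\mS$: Property~$F$, and the nondegeneracy condition $a_{n-1}a_{n}a_{n-1}\neq a_{n}$ (here $|\mS|=2$, so with $a_{n-1}=a=a_{1}$, $a_{n}=b=a_{2}$ this reads $aba\neq b$, the condition written $bab\neq b$ in the paragraph after Theorem~\ref{thm:order}). Then, with Theorem~\ref{thm:order} supplying the orderings $<_{D}$ and $<_{A}$ themselves, I would read off each of (1)--(5) from the appropriate Section~2 statement.

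Property~$F$ is immediate from the presentation $G_{m,n}=\langle a,b\mid (ba^{m-1})^{n-1}b=a\rangle$. Since $|\mS|=2$, the monoid $P_{\mA}^{(2)}$ is generated by $b$ alone, so its elements are the nonnegative powers of $b$; hence, using $ab^{-k}a^{-1}=(ab^{-1}a^{-1})^{k}$ and $a^{-1}b^{-k}a=(a^{-1}b^{-1}a)^{k}$, it is enough to exhibit $ab^{-1}a^{-1}$ and $a^{-1}b^{-1}a$ as positive words in $a$ and $b$. Substituting $a=x$, $b=yx^{-m+1}$, $x^{m}=y^{n}$ and $y=ba^{m-1}$ gives
\[
a b^{-1} a^{-1} = (ba^{m-1})^{n-2}\, b\, a^{m-2}, \qquad
a^{-1} b^{-1} a = a^{m-2}\, (ba^{m-1})^{n-2}\, b ,
\]
which are positive words since $m\geq 2$ and $n\geq 2$. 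For the nondegeneracy: if $aba=b$ then $y^{-1}xy=x^{-1}$, whence $y^{-1}x^{m}y=x^{-m}$; but $x^{m}=y^{n}$ is central in $G_{m,n}$, so this would force $x^{2m}=1$, contradicting the fact that the vertex group $\langle x\rangle\cong\Z$ embeds in the amalgam $\Z*_{\Z}\Z$. Hence $aba\neq b$.

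It remains to assemble the five assertions. Set $G^{(1)}=G$ and $G^{(2)}=\langle s_{2}\rangle=\langle b\rangle$, which is infinite cyclic because left-orderable groups are torsion-free. Then (1) and (2) are Proposition~\ref{prop:convexD}(3) and Proposition~\ref{prop:convexA}(3), together with the trivial fact that $\{1\}$ is convex; (3) and (4) are the theorem on Conradian properties of Dehornoy-like and isolated orderings in Section~2, whose two hypotheses --- Property~$F$ and $a_{n-1}a_{n}a_{n-1}\neq a_{n}$ --- we have just checked, and which identifies the Conradian soul with $G^{(n)}=G^{(2)}$; and (5) is Theorem~\ref{thm:converge} applied to $<_{D}$ (legitimate since $aba\neq b$), which gives that $<_{D}$ accumulates on its conjugates, hence is not isolated in $\LO(G)$, hence --- by the characterization of isolated orderings via finite generation of the positive cone --- that $\Sigma_{\mS}$ is not finitely generated. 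I expect no genuine obstacle: the only computation is the pair of word identities above, and the only point to be careful about is matching the indexing of the $G^{(i)}$ with the abstract Section~2 statements; the substantive input, Theorem~\ref{thm:order}, is proved separately. The standing assumption $(m,n)\neq(2,2)$ enters only in~(5), since $G_{2,2}$ has merely finitely many left orderings, so there $<_{D}$ is instead isolated.
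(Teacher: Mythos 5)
Your overall plan is exactly the paper's: the paper's entire proof of this corollary is the one-sentence remark that $\mS$ has Property $F$ and the nondegeneracy condition holds, after which (1)--(5) are quoted from Section~2. Your verification of Property $F$ is correct and is the real computational content: $ab^{-1}a^{-1}=(ba^{m-1})^{n-2}ba^{m-2}$ and $a^{-1}b^{-1}a=a^{m-2}(ba^{m-1})^{n-2}b$ are the right identities, and since $P_{\mA}^{(2)}$ consists of the positive powers of $b$, passing to $k$-th powers handles all of $(P_{\mA}^{(2)})^{-1}$. The assembly of (1)--(5) from Propositions \ref{prop:convexD} and \ref{prop:convexA}, Theorem \ref{thm:converge} and the theorem on Conradian properties is also fine.

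The gap is in the nondegeneracy check. You verify $aba\neq b$, i.e.\ $a_{n-1}a_{n}a_{n-1}\neq a_{n}$ as literally stated in Lemma \ref{lem:key} and Theorem \ref{thm:converge}; but that is not the identity those proofs actually use. In the proof of Lemma \ref{lem:key} the excluded case $pq^{-1}p^{-1}=q^{1}$ is equivalent to $qpq=p$, so the condition needed is $a_{n}a_{n-1}a_{n}\neq a_{n-1}$, which for $G_{m,n}$ reads $bab\neq a$ (this is what the paper's ``$bab\neq b$'' is a typo for). The two conditions are genuinely different: your argument shows $aba\neq b$ for \emph{every} $(m,n)$, including $(2,2)$, yet in the Klein bottle group $bab=a$ holds and conclusions (3)--(5) fail --- indeed your own closing remark that (5) fails for $(2,2)$ contradicts your verification if the stated hypothesis were the operative one. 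So you must instead check $bab\neq a$: compute $bab=yx^{-m+2}yx^{-m+1}$, so $bab=a$ is equivalent to $x^{2-m}=y^{n-2}$, and since $\langle x\rangle\cap\langle y\rangle=\langle x^{m}\rangle=\langle y^{n}\rangle$ in the amalgam $\Z*_{\Z}\Z$, this forces $m\mid 2$ and $n\mid 2$, i.e.\ $(m,n)=(2,2)$. With that replacement the corollary follows exactly as you describe.
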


Before giving a proof, first we recall the structure of the group $G_{m,n}$.
Let $Z_{m,n} = \Z_{m} * \Z_{n} = \langle X,Y \; | \; X^{m}=Y^{n}=1 \rangle$, where $\Z_{m}$ is the cyclic group of order $m$ and let $\pi: G_{m,n} \rightarrow Z_{m,n}$ be a homomorphism  defined by $\pi(x)= X$, $\pi(y)= Y$. The kernel of $\pi$ is an infinite cyclic group generated by $x^{m}=y^{n}$ which is the center of $G_{m,n}$. Thus, we have a central extension
\[ 1 \rightarrow \Z \rightarrow G_{m,n} \rightarrow Z_{m,n} \rightarrow 1.\]

We describe an action of $Z_{m,n}$ on $S^{1}$ which is used to prove Property $A$.
Let $T=T_{m,n}$ be the Bass-Serre tree for the free product $Z_{m,n}=\Z_{m}*\Z_{n}$. That is,
$T_{m,n}$ is a tree whose vertices are disjoint union of cosets $\Z_{m,n} \slash \Z_{m} \coprod \Z_{m,n} \slash \Z_{n}$ and edges are $Z_{m,n}$. Here an edge $g \in Z_{m,n}$ connects two vertices $g \Z_{m}$ and $g\Z_{n}$. (See Figure \ref{fig:tree} left for example the case $(m,n)=(4,3)$). 

In our situation, the Bass-Serre tree $T$ is naturally regarded as a planer graph. More precisely, we regard $T$ as embedded into the hyperbolic plane $\mathbb{H}^{2}$. Now $X$ acts on $T_{m,n}$ as a rotation centered on $P$, and $Y$ acts on $T$ as a rotation centered on $Q$. This defines an faithful action of $Z_{m,n}$ on $T$. Let $E(T)$ be the set of the ends of $T$, which is identified with the set of infinite rays emanating from a fixed base point of $T$. The end of tree $E(T)$ is a Cantor set, and naturally regarded as a subset of the points at infinity $S_{\infty}^{1}$ of $\mathbb{H}^{2}$.
The action of $Z_{m,n}$ induces a faithful action on $E(T)$, and this action extends an action on $S^{1}$. We call this action the {\em standard action} of $Z_{m,n}$.

The standard action is easy to describe since $X$ and $Y$ act as rotations of the tree $T$.
$X$ acts on $S^{1}$ so that it sends an interval $[p_{i},p_{i+1}]$ to the adjacent interval $[p_{i+1},p_{i+2}]$ (here indices are taken modulo $m$), and $Y$ acts on $S^{1}$ so that it sends an interval $[q_{i},q_{i+1}]$ to $[q_{i+1},q_{i+2}]$ (here indices are taken modulo $n$).
See Figure \ref{fig:tree} right. More detailed description of the set of ends $E(T)$ and the standard action will be given in next section.

\begin{figure}[htbp]
 \begin{center}
  \includegraphics[width=100mm]{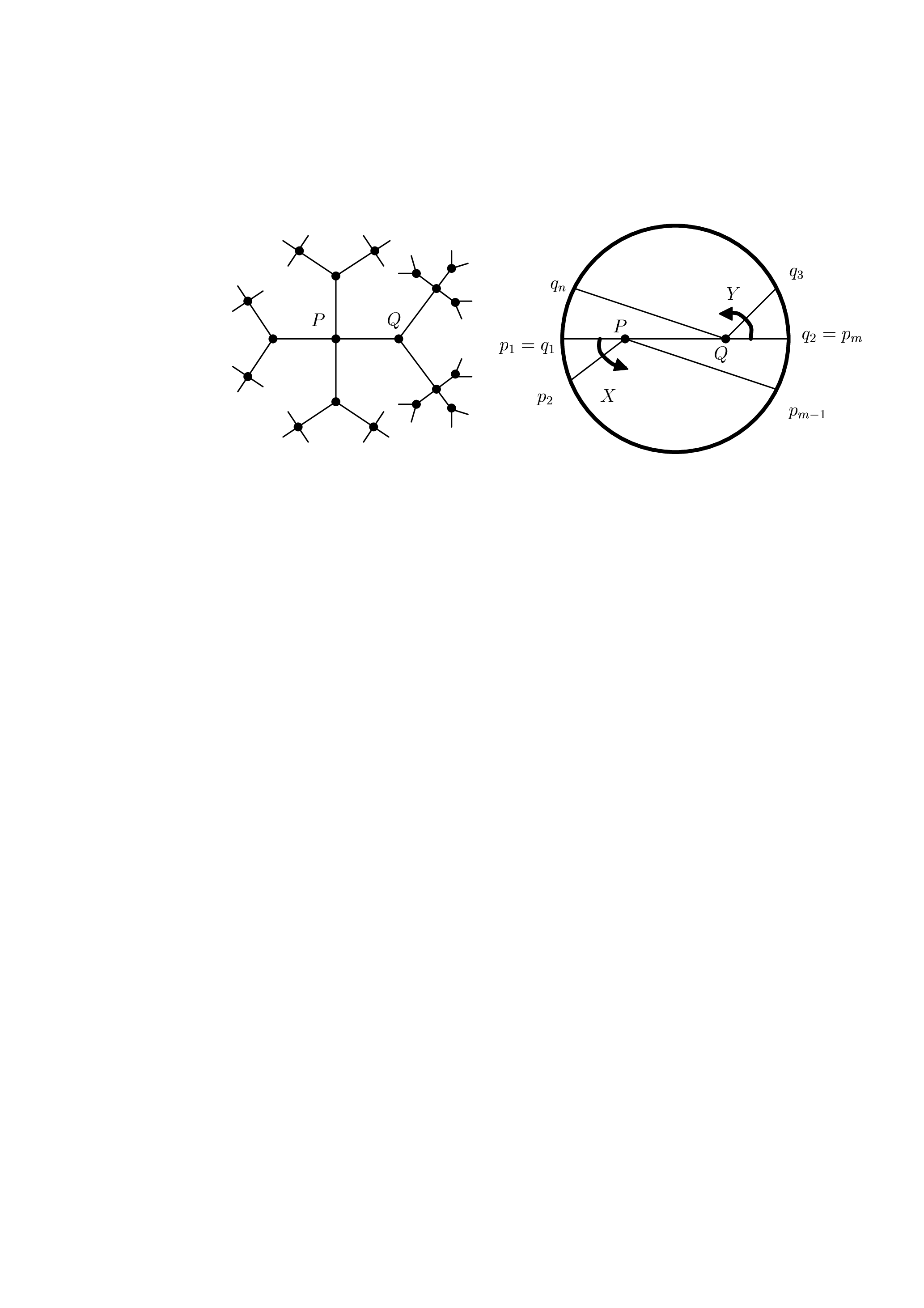}
 \end{center}
 \caption{Bass-Serre Tree and action of $Z_{m,n}$ on $S^{1}$}
 \label{fig:tree}
\end{figure}

Using the standard action, we show the Property $A$ for $\mS$, which is equivalent to the following statement by Claim \ref{claim:2}. 

\begin{lem}
\label{lem:A}
If $g \in P_{\mA}$, then $g \neq 1$.
\end{lem}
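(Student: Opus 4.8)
The plan is to detect the non-triviality of a positive word in $\mA$ by lifting the standard action of $Z_{m,n}$ on $S^1$ to an action of the central extension $G_{m,n}$ on the line $\R$. Since $X$ and $Y$ cyclically shift the $p$-arcs, respectively the $q$-arcs, by one step, the standard action sends them to circle homeomorphisms conjugate to the rotations by $1/m$ and $1/n$; hence $X$ has a lift $A$ with $A^m = T$ and $Y$ a lift $B'$ with $(B')^n = T$, where $T(t)=t+1$. Because $G_{m,n} = \langle x,y \mid x^m = y^n\rangle$, sending $x\mapsto A$, $y\mapsto B'$ defines a homomorphism $\widetilde{\rho}\colon G_{m,n}\to\Homeo(\R)$ with $\widetilde{\rho}(z)=T$ for the central generator $z=x^m=y^n$. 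Conjugating $A$ and $B'$ to lifts of rigid rotations yields the basic estimates $t<A(t)<t+1$ and $t<B'(t)<t+1$ for every $t$ (using $m,n\ge 2$), and $\widetilde{\rho}(b)=B'A^{-(m-1)}=T^{-1}B'A$.

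The structural input I need from $G_{m,n}$ is that in $Z_{m,n}=\Z_m*\Z_n$ one has $\pi(b)=YX^{-(m-1)}=YX$, and that $YX$, being cyclically reduced of length two, acts on the Bass--Serre tree as a hyperbolic isometry; thus $YX$ has infinite order and fixes the two ends of its axis, so its rotation number on $S^1$ is $0$. Consequently $C:=\widetilde{\rho}(yx)=B'A$, a lift of $YX$, has integer rotation number, and together with $t<C(t)<t+2$ this forces $\mathrm{rot}(C)=1$; so there is a point $t_\ast\in\R$ with $C(t_\ast)=t_\ast+1$, whence $C^l(t_\ast)=t_\ast+l$ for all $l\in\Z$.

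Now let $g\in P_\mA$ be represented by a nonempty positive word in $a,b$. If the word contains no $b$, then $g=x^\ell$ with $\ell\ge 1$, which is non-trivial since $\langle x\rangle\cong\Z$ in $G_{m,n}$. If it contains no $a$, then $g=b^r$ with $r\ge 1$ and $\pi(g)=(YX)^r\ne 1$ because $YX$ has infinite order, so $g\ne 1$. Otherwise group $g=a^{k_0}(ba^{k_1})(ba^{k_2})\cdots(ba^{k_r})$ with $r\ge 1$ and $k_j\ge 1$ for some $j$; substituting $b=z^{-1}yx$ and using that $z$ is central gives $g=z^{-r}x^{k_0}\prod_{i=1}^{r}(yx)x^{k_i}$, so $\widetilde{\rho}(g)=T^{-r}A^{k_0}\prod_{i=1}^{r}(CA^{k_i})$. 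Pushing $t_\ast$ through the factors from right to left: each $A^{k_i}$ is $\ge\mathrm{id}$ and $C$ is increasing with $C(t_\ast+l)=t_\ast+l+1$ for $l\in\Z$, so $\prod_{i=1}^{r}(CA^{k_i})(t_\ast)\ge t_\ast+r$, and the factor carrying some $k_j\ge 1$ (or the leading $A^{k_0}$ if $k_0\ge 1$), being strictly above $\mathrm{id}$, makes one of these inequalities strict; hence $\widetilde{\rho}(g)(t_\ast)=A^{k_0}\big(\prod_{i=1}^{r}(CA^{k_i})(t_\ast)\big)-r>t_\ast$. In particular $\widetilde{\rho}(g)\ne\mathrm{id}$, so $g\ne 1$.

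I expect the main work to be in the first paragraph: one must check that the standard action restricts on each torsion factor to a rotation of angle exactly $1/m$, resp. $1/n$ (not merely to some rotation), so that the lift $\widetilde{\rho}$ with $\widetilde{\rho}(z)=T$ genuinely exists and the sandwiching estimates $t<A(t)<t+1$, $t<B'(t)<t+1$ hold. The other subtle point is the all-$b$ case $g=b^r$, where the line action alone is blind to non-triviality ($\widetilde{\rho}(b^r)(t_\ast)=t_\ast$) and one must fall back on the infinite order of $YX$ in $Z_{m,n}$; the monotonicity bookkeeping in the mixed case is then elementary.
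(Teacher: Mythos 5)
Your proof is correct, and it takes a genuinely different route from the paper's, although both rest on the same standard action coming from the Bass--Serre tree of $Z_{m,n}=\Z_m*\Z_n$. The paper stays downstairs on $S^1$: it tracks images of specific arcs $[p_i,p_j]$, $[q_i,q_j]$ under the letters $A=X$ and $B=YX$, with a case analysis on the first and last syllables and on maximal subwords of the form $(BA^{m-1})^{i}$, and treats the central elements $a^{mN}$ separately. You instead go upstairs to $\R$, normalize the central extension so that $\widetilde{\rho}(x^m)=T$, and reduce everything to one dynamical fact: since $YX$ is cyclically reduced of syllable length two it is hyperbolic on the tree, so the lift $C=B'A$ has integer rotation number, which the sandwich $t<C(t)<t+2$ pins to $1$, yielding $t_\ast$ with $C(t_\ast)=t_\ast+1$; the monotone bookkeeping $\widetilde{\rho}(g)(t_\ast)>t_\ast$ then replaces the paper's arc-by-arc analysis. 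The two points you flag are exactly the right ones and both check out against the paper: the explicit formulas for the lifted action in Section 3.2 show $\Theta(x)^m=\Theta(y)^n=T$, so $X$ and $Y$ are conjugate to rotations by exactly $1/m$ and $1/n$ and the lifts $A$, $B'$ with the stated estimates exist; and the pure power $b^r$ is indeed invisible to the line action at $t_\ast$ and must be dispatched via the infinite order of $YX$ in $\Z_m*\Z_n$ (the paper has the mirror-image blind spot, handling the central kernel elements $a^{mN}$ separately before projecting to $Z_{m,n}$). What your approach buys is the elimination of the delicate interval case analysis in favor of a single translation-number computation; what it costs is the need to identify the rotation angles precisely and to invoke hyperbolicity of $YX$ on the tree.
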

\begin{proof}

Let $g \in P_{\mA}$ and put $A = \pi(a) = X$ and $B=\pi(b)=YX^{-m+1}=YX$.
If $g \in \textrm{Ker}\, \pi$, that is, $g=a^{mN}$ for $N>0$, then $g \neq 1$ is obvious so we assume $g \neq a^{mN}$.
Let us put  
\[ \pi(g) = A^{s_{k}} B^{r_{k}} \cdots A^{s_{1}}B^{r_{1}}.\]
where $0< s_{i} < m$ $(i < k)$, $0 \leq s_{k} < m$ and $0< b_{i}$ $(i>1)$, $0 \leq b_{1}$.

First observe that the dynamics of $B$, $A$ and $(BA^{m-1})^{i}B$ are given by the following formulas.
\begin{eqnarray*}
B[p_{m},p_{m-1}] & =& YX[p_{m},p_{m-1}] =Y[p_{1},p_{m}] = Y[q_{1},q_{2}]=[q_{2},q_{3}]\subset[p_{m},p_{1}]\\
A^{i}[p_{m},p_{1}] & \subset &[p_{m},p_{m-1}] \;\;(i \neq m-1)\\
(BA^{m-1})^{i}B[p_{m},p_{m-1}] & = &(YX X^{m-1})^{i}YX[p_{m},p_{m-1}] =Y^{i+1}[p_{1},p_{m}] \subset Y^{i+1}[q_{1},q_{2}] \\
& \subset&  [q_{2+i},q_{3+i}] \subset [p_{m},p_{1}].
\end{eqnarray*}
Here $[p_{m},p_{m-1}]$ represents the interval $[p_{m},p_{1}]\cup[p_{1},p_{2}]\cup \cdots \cup [p_{m-2},p_{m-1}]$.
Since $(BA^{m-1})^{n-1}B =A$, we can assume that the above word expression does not contains a subword of the form $(BA^{m-1})^{n-1}B$.
Thus, by using of the above formulas repeatedly, we conclude 
\[
\left\{
\begin{array}{ll}
\pi(g) [p_{r_{k}-1},p_{r_{k}}] =  [p_{r_{k}},p_{r_{k}+1}]  & (s_{1}\neq 0, r_{k} \neq 0)\\
\pi(g) [p_{m},p_{1}] =  [p_{r_{k}},p_{r_{k}+1}] &  (s_{1}= 0, r_{k} \neq 0)\\
\pi(g) [p_{1},p_{2}] = [p_{m},p_{1}] &  (s_{1} \neq 0, r_{k} = 0)
\end{array}
\right.
\]
So we conclude $\pi(g) \neq 1$ if $s_{1}\neq 0$ or $r_{1} \neq 0$.
If $s_{1}=r_{k}=0$, we need more careful argument to treat the case the word $\pi(g)$ contains a subword of the form $(BA^{m-1})^{i}$.
Let us write 
\[ \pi(g) = B^{s_{k}-1} (BA^{m-1})^{i} \underbrace{B \cdots A^{r_{1}}}_{*} \]
where we take $i$ the maximal among such description of the word $\pi(g)$. That is, the prefix of the word $*$ is not $BA^{m-1}$. 
Then for $i \neq 0$,
\[ \pi(g) [q_{i},q_{i+1}] \subset B^{s_{k}-1}(BA^{m-1})^{i}[p_{m},p_{1}] \subset  B^{s_{k}-1}[p_{m},p_{1}]. \]
Thus if $s_{k} \neq 1$, then 
$\pi(g)[q_{n},q_{1}] \subset [q_{2},q_{3}]$.
and if $s_{k}=1$, then $\pi(g)[q_{2},q_{3}] =[q_{2+i},q_{3+i}]$. Thus we proved that in all cases $\pi(g)$ acts on $S^{1}$ non trivially, hence $g \neq 1$.
\end{proof}

Next we show Property $C$, which is equivalent to the following statement according to Claim \ref{claim:1}.

\begin{lem}
\label{lem:B}
$P_{\mA} \cup \{1\} \cup P_{\mA}^{-1} = G$ .
\end{lem}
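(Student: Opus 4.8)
\textbf{Proof proposal for Lemma \ref{lem:B}.}

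The plan is to show that every element of $G = G_{m,n}$ is either trivial, $\mA$-word positive, or $\mA$-word negative, by exploiting the amalgamated product structure $G = \langle a \rangle *_{\langle a^m = b^? \rangle} \langle \,\cdot\, \rangle$ together with the presentation $G_{m,n} = \langle a,b \mid (ba^{m-1})^{n-1}b = a \rangle$. Concretely, given $g \in G$, I would first use the relation $(ba^{m-1})^{n-1}b = a$ to put $g$ into a normal form as a word in $a^{\pm 1}$ and $b^{\pm 1}$, and then argue that by conjugating/shifting via this relation and the central element $z = a^m = (ba^{m-1})^{n-1}b \cdot a^{m-1}$ (or whatever the correct expression of the center is in terms of $a,b$), any such word can be rewritten so that all exponents of $b$ have the same sign, and then all exponents of $a$ can be absorbed similarly. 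The key algebraic input is that powers of the central element $z$ are both $\mA$-word positive and $\mA$-word negative (since $z = a^m$ and $z^{-1}$ can be rewritten as a positive word using the defining relation), so one is free to multiply $g$ by a high enough power of $z^{\pm 1}$ to clear away negative letters.

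More precisely, the main step I would carry out is the following reduction. Write $g$, using the defining relation, in the form $g = a^{e_0} b^{f_1} a^{e_1} \cdots b^{f_k} a^{e_k}$ with $0 \le e_i \le m-1$ for the internal exponents (this is possible because $a^m$ is central and can be pulled out). If $g \in \mathrm{Ker}\,\pi$, then $g$ is a power of $a^m$, hence $\mA$-word positive or negative or trivial directly. Otherwise, I would show by induction on $k$ (the syllable length in $Z_{m,n}$) that one can arrange all $f_i > 0$: whenever some $f_i < 0$, multiply $g$ on the appropriate side by a power of $z^{-1}$, rewrite $z^{-1}$ using $(ba^{m-1})^{n-1}b = a$ so that it contributes only positive $b$'s and $a$'s, and observe that this either shortens the word or strictly decreases the number of negative $b$-syllables, while only changing $g$ within the coset $g \langle z \rangle$. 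Since multiplying by $z^{\pm 1}$ preserves the trichotomy "positive / trivial / negative" (as $z$ and $z^{-1}$ are both word-positive), this suffices. The case of negative $a$-exponents $e_i$ is handled the same way, or is subsumed once all $b$-syllables are positive, since then the $a$-exponents can be normalized into $[0,m-1]$ by pushing central powers out.

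The hard part, and the step I expect to be the main obstacle, is making the induction genuinely terminate: a naive "clear one negative syllable at a time by multiplying by $z^{-1}$" risks introducing new negative syllables elsewhere, because rewriting $z^{-1}$ via the defining relation interacts with the neighbouring letters of $g$. The cleanest way around this is probably to organize the argument geometrically, mirroring the proof of Lemma \ref{lem:A}: use the standard action of $Z_{m,n}$ on $S^1$ to read off from $\pi(g)$ which syllable structure is forced, and then lift. That is, I would determine the "sign" of $g$ by how $\pi(g)$ moves a fixed interval $[p_m, p_1]$ (or an end of the Bass--Serre tree $T$) — if $\pi(g)$ pushes it "forward" we land in $P_{\mA}$, if "backward" in $P_{\mA}^{-1}$, and if $\pi(g) = 1$ then $g \in \langle z \rangle$ and the claim is immediate. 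Combined with Lemma \ref{lem:inclusion} and the already-established Property $A$ (Lemma \ref{lem:A}), this dynamical bookkeeping should control the rewriting and force termination, giving $P_{\mA} \cup \{1\} \cup P_{\mA}^{-1} = G$.
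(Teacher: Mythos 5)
There is a genuine gap, and it sits at the load-bearing step of your plan. You assert that the central element $z=a^{m}=(ba^{m-1})^{n}$ has the property that \emph{both} $z$ and $z^{-1}$ are $\mA$-word positive, and you use this to claim that multiplying $g$ by high powers of $z^{\pm 1}$ "preserves the trichotomy" and lets you clear negative letters. This is false: if $z^{-1}$ admitted a positive word expression in $a,b$, then $1=z\cdot z^{-1}$ would lie in the monoid $P_{\mA}$, contradicting Property $A$, i.e.\ Lemma \ref{lem:A} (in fact no non-trivial element can be both $\mA$-word positive and $\mA$-word negative once Property $A$ holds). Consequently, replacing $g$ by $gz^{\pm N}$ genuinely changes the element, and knowing that $gz^{\pm N}\in P_{\mA}$ tells you nothing a priori about which of $P_{\mA}$, $\{1\}$, $P_{\mA}^{-1}$ contains $g$ --- establishing that this trichotomy exhausts $G$ is exactly what the lemma asks. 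The defining relation does let you trade negative $b$'s for negative $a$'s (from $(ba^{m-1})^{n-1}b=a$ one gets $b^{-1}=a^{-1}(ba^{m-1})^{n-1}$), so after centralizing one can always write $g=a^{mM}a^{s_{k}}b^{r_{k}}\cdots a^{s_{1}}b^{r_{1}}$ with all $b$-exponents nonnegative and internal $a$-exponents in $[0,m)$; the real work, which your proposal never carries out, is the case $mM+s_{k}<0$, where one must produce an $\mA$-\emph{negative} word for $g$. The paper does this by induction on the syllable length $k$, using the relation $a^{-1}b=(a^{m}b)^{-n}$, equivalently
\[ a^{-1}b^{r}=\bigl[(a^{m}b)^{-(n-1)}b^{-1}a^{-m+1}\bigr]^{r-1}(a^{m}b)^{-n}, \]
to push the single offending negative $a$-power through the word, together with a minimality assumption on $k$ to exclude the one degenerate pattern where the rewriting would not reduce length.

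Your fallback suggestion (read the "sign" of $g$ from the standard action of $Z_{m,n}$ on $S^{1}$, as in Lemma \ref{lem:A}) also does not close the gap as stated. Dynamics readily shows that a given positive word acts nontrivially (Property $A$), but Property $C$ requires \emph{producing} an $\mA$-positive or $\mA$-negative word representative for an arbitrary $g$; knowing which way $\pi(g)$ moves an interval does not by itself yield such a representative. A dynamical proof is possible --- the paper sketches one after Theorem \ref{thm:dynamics}, via a complexity $C(g)$ and an explicit $1(\mS)$-positive element $p_{g}$ with $C(p_{g}g)<C(g)$ --- but that descent construction is precisely the "bookkeeping" you defer, and it requires a nontrivial case-by-case analysis. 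As written, the proposal is missing the inductive rewriting argument (or an explicit complexity descent) that constitutes the proof.
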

\begin{proof}

Let $g \in G$ be a non-trivial element. 

Since $a^{m}=(ba^{m-1})^{n}=(a^{m-1}b)^{n}$ is central, we may write $g$ as
\[ g= a^{mM} a^{s_{k}} b^{r_{k}} \cdots a^{s_{1}}b^{r_{1}}. \]
where $0< s_{i} < m$ $(i < k)$, $0 \leq s_{k} < m$ and $0< b_{i}$ $(i>1)$, $0 \leq b_{1}$.

Among such word expressions, we choose the word expression $w$ so that $k$ is minimal.
If $mM+s_{k} \geq 0$, then $g \in P_{\mA}$. 
So we assume that $mM+s_{k} <0$ and we prove $g \in P_{\mA}^{-1}$ by induction on $k$.
The case $k=1$ is a direct consequence of Property $F$.

First observe that from a relation $a^{-1}b = (a^{m}b)^{-n}$, we get a relation
\[ a^{-1}b^{r} = [(a^{m}b)^{-(n-1)} b^{-1}a^{-m+1}]^{r-1} (a^{m}b)^{-n} \]
for all $r>0$.
Thus, by applying this relation, $g$ is written as
\[ g= X (a^{m}b)^{-n} \cdot  a^{s_{2}}b^{r_{2}} \cdots \]
for some $X \in P_{\mA}^{-1}$.
Unless $s_{k-1}=s_{k-2}= \cdots = s_{k-n} = m$ and $r_{k-1}= r_{k-2} = \cdots = r_{k-n}=1$, by reducing this word expression, we obtain a word expression of the form 
\[ g= X' a^{-1} b^{r_{i}'} a^{s_{i-1}} \cdots \]
where $X' \in P_{\mA}^{-1}$ and $i<k$.
By inductive hypothesis, $ a^{-1} b^{r_{i}'}a^{s_{i-1}} \cdots \in P_{\mA}^{-1}$, hence we conclude $g \in P_{\mA}^{-1}$.
 
Now assume $s_{k-1}=s_{k-2}= \cdots = s_{k-n} = m$ and $r_{k-1}= r_{k-2} = \cdots = r_{k-n}=1$. Since $(a^{m}b)^{n}= b^{-1}a$, by replacing the subword $(a^{m}b)^{n}$ with $b^{-1}a$ and canceling $b^{-1}$ we obtain another word representative 
\[ g= a^{(m+1) M } a^{s_{k}}b^{r_{k}-1} a^{s_{k-n-1} +1} b^{r_{k-n-1}} \cdots \]
 which contradicts the assumption that we have chosen the first word representative of $g$ so that $k$ is minimal.
\end{proof}

These two lemmas and Theorem \ref{thm:DOtoIO} prove Theorem \ref{thm:order}.

\begin{ques}
Theorem \ref{thm:order} provides a negative answer to the Main question rasied by Navas in \cite{n2}, which concerns the charactarization of groups having an isolated ordering defined by two elements. Now we ask the following refined version of the question:
\begin{enumerate}
\item[($1$)] {\em If a group $G$ has a Dehornoy-like ordering defined by an ordered generating set of cardinal two, then $G=G_{m,n}$ for some $m,n>1$ ?}
\item[($1'$)] {\em If a group $G$ has a Dehornoy-like ordering defined by an ordered generating set of cardinal two having Property F, then $G=G_{m,n}$ for some $m,n>1$ ?}
\item[($2$)] {\em If a group $G$ has an isolated ordering whose positive cone is generated by two elements, then $G=G_{m,n}$ for some $m,n>1$ ?}
\end{enumerate}
These questions also concern the Question 1, since an affirmative answer for (1) yields an 
affirmative answer of Question 1 for the case $n=2$. 

We also mention that to find a group $G \neq B_{n}$ with a Dehornoy-like ordering defined by an ordered generating set of cardinal more than two is an open problem. 
In particular, does a natural candidate, a group of the form $(\cdots (\Z*_{\Z} \Z)*_{\Z}\Z) \cdots )*_{\Z} \Z$ have a Dehornoy-like ordering ?
\end{ques}

\subsection{Dynamics of the Dehornoy-like ordering of $\Z*_{\Z} \Z$}

As is well-known, if $G$ faithfully acts on the real line $\R$ as orientation preserving homeomorphisms, then one can construct a left-ordering of $G$ as follows.
Let $\Theta: G \hookrightarrow \textrm{Homeo}_{+}(\R)$ be a faithful left action of $G$ and choose a dense countable sequence $I= \{x_{i}\}_{i=1,2,\ldots}$ of $\R$.
For $g,g' \in G$ we define $g <_{I} g'$ if there exists $j$ such that $g(x_{i}) = g'(x_{i})$ for $i <j$ and $g(x_{j}) <_{\R} g'(x_{j})$, where $<_{\R}$ be the standard ordering of $\R$ induced by the orientation of $\R$. 
We say the ordering $<_{I}$ is defined by the action $\Theta$ and the sequence $I$.
Assume that the stabilizer of a finite initial subsequence $\{ x_{1},\ldots,x_{k} \}$ is trivial. Then to define the ordering, we do not need to consider the rest of sequence $\{x_{k+1},x_{k+2},\ldots\}$. In such case, we denote the ordering $<_{I}$ by $<_{\{x_{1},\ldots,x_{k}\}}$ and call the {\em ordering defined by} $\{x_{1},\ldots,x_{k}\}$ (and the action $\Theta$).
 
Conversely, one can construct an orientation-preserving faithful action of $G$ on the real line $\R$ from a left-ordering of $G$ if $G$ is countable. See \cite{n1} for details.

In this section we give an alternative definition of the Dehornoy-like ordering $<_{D}$ of $G_{m,n}$ by using the dynamics of $G_{m,n}$. Recall that $G_{m,n}$ is a central extension of $Z_{m,n}$ by $\Z$.
By lifting the standard action of $Z_{m,n}$ on $S^{1}$, we obtain a faithful orientation-preserving action of $G_{m,n}$ on the real line. We call this action the {\it standard action} of $G_{m,n}$ and denote by $\Theta: G_{m,n} \rightarrow \textrm{Homeo}_{+}(\R)$.

We give a detailed description of the action of $G_{m,n}$ on $\R$.
First of all, we give a combinatorial description of the end of the tree $T=T_{m,n}$.
 
Let us take a basepoint $*$ of $T$ as the midpoint of $P$ and $Q$.
For each edge of $T$, we assign a label as in Figure \ref{fig:label}. 
Let $e$ be a point of $E(T)$, which is represented by an infinite ray $\gamma_{e}$ emanating from $*$.
Then by reading a label on edge along the infinite path $\gamma_{e}$, $\gamma_{e}$ is encoded by an infinite sequence of integers $\pm l_{1}l_{2}\cdots$. 

\begin{figure}[htbp]
 \begin{center}
  \includegraphics[width=110mm]{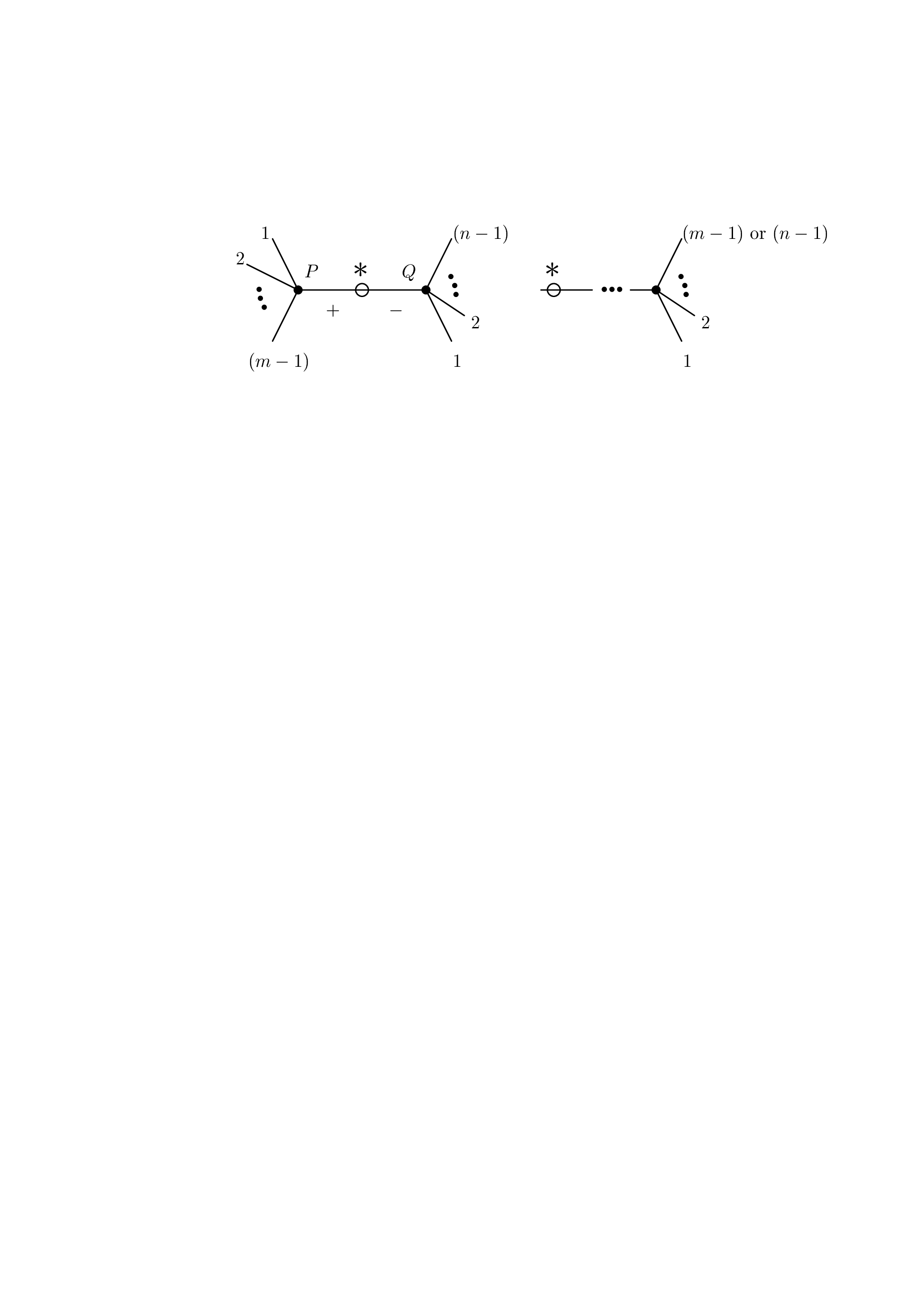}
 \end{center}
 \caption{Labeling of edge}
 \label{fig:label}
\end{figure}

Now let $p: \R \rightarrow S^{1}$ be the universal cover, and $\tE(T) = p^{-1}(E(T)) \subset \R$. Then the standard action $G_{m,n}$ preserves $\tE(T)$.
A point of $\tE(T)$ is given as the sequence of integers $(N; \pm l_{1}l_{2}\cdots)$ where $N \in \Z$.
Observe that the set of such a sequence of integers has a natural lexicographical ordering.
This ordering of $\tE(T)$ is identical with the ordering induced by the standard ordering $<_{\R}$ of $\R$, so we denote the ordering by the same symbol $<_{\R}$.

The action of $G_{m,n}$ on $\tE(T)$ is easy to describe, since $X$ and $Y$ act on $T_{m,n}$ as rotations of the tree.
\[ x: \left\{ 
\begin{array}{lll}
(N;+i\,\cdots) & \mapsto &(N;+(i+1)\,\cdots) \;\;\;\;\;(i\neq m-1)\\
(N;+(m-1)\,\cdots) & \mapsto & (\,(N+1)\,;-\cdots) \\
(N;-i\,\cdots) &\mapsto & (N;+1i\,\cdots) \\
\end{array}
\right. 
\]
\[ y: \left\{ 
\begin{array}{lll}
(N;+i\,\cdots) & \mapsto &(\,(N+1)\,;-1i\,\cdots) \\
(N;-i\,\cdots) &\mapsto & (N;-(i+1)\,\cdots) \;\;\;\;\;\;\;\;(i\neq n-1)\\
(N;-(n-1)\,\cdots) &\mapsto & (N;+\cdots)
\end{array}
\right. 
\]
Therefore, the action of $s_{1}$, $s_{2}$ and $s_{2}^{-1}$ are given by the formula
\[ s_{1}: \left\{ 
\begin{array}{lll}
(N;+i\,\cdots) & \mapsto &(N;+11(i+1)\,\cdots) \;\;\;\;\;(i\neq m-1)\\
(N;+(m-1)i\cdots) & \mapsto & (N;+1(i+1)\cdots) \;\;\;\;\;(i \neq n-1)\\
(N;+(m-1)(n-1)i\cdots) & \mapsto & (N;+(i+1)\cdots) \;\;\;\;\;(i \neq m-1)\\
(N;+(m-1)(n-1)(m-1)\cdots) & \mapsto & (\, (N+1)\, ;-\cdots)\\
(N;-i\,\cdots) &\mapsto & (N;+111\,\cdots) 
\end{array}
\right. 
\]
\[ s_{2}: \left\{ 
\begin{array}{lll}
(N;+\,\cdots) & \mapsto &(N;+(m-1)(n-1)\,\cdots) \\
(N;-i\,\cdots) &\mapsto & (N;+(m-1)(i-1)\,\cdots) \;\;\;\;\;\;\;\;(i\neq 1)\\
(N;-1i\,\cdots) &\mapsto & (N;+(i-1)\cdots) \;\;\;\;\;\;\;(i\neq 1)\\
(N;-11\,\cdots) &\mapsto & (N;-\cdots) 
\end{array}
\right. 
\]
\[ s_{2}^{-1}: \left\{ 
\begin{array}{lll}
(N;+i\,\cdots) & \mapsto &(N;-1(i+1)\,\cdots)  \;\;\;\;\;\;\;\;(i\neq m-1) \\
(N;+(m-1)i\,\cdots) &\mapsto & (N;-(i+1)\,\cdots) \;\;\;\;\;\;\;\;(i\neq n-1)\\
(N;+(m-1)(n-1)\,\cdots) &\mapsto & (N;+\cdots) \\
(N;-\cdots) &\mapsto & (N;-11\cdots) 
\end{array}
\right. 
\]

Let $E = (0;-1111\cdots)$ and $F=(0;+1111\cdots)$ be the point of $\tE(T)$ and let $<_{\{E,F\}}$ be a left-ordering of $G_{m,n}$ defined by the sequence $\{E,F\}$ and the standard action $\Theta$. The following theorem gives an alternative definition of $<_{D}$.

\begin{thm}
\label{thm:dynamics}
The left ordering $<_{\{E,F\}}$ is identical with the Dehornoy-like ordering $<_{D}$ defined by $\mS$.
\end{thm}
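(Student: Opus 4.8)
The plan is to show that the two orderings have the same positive cone. Since $<_D$ has positive cone $\Sigma_{\mS}$, it suffices to identify $\Sigma_{\mS}$ with the positive cone $P_{\{E,F\}}$ of $<_{\{E,F\}}$, i.e.\ with the set of $g \in G_{m,n}$ such that either $g(E) >_{\R} E$, or $g(E) = E$ and $g(F) >_{\R} F$. One inclusion is the substantive one; for the other I will use that both cones satisfy {\bf LO1} and {\bf LO2}, so an inclusion $\Sigma_{\mS} \subset P_{\{E,F\}}$ automatically forces equality (a proper containment of positive cones is impossible when both define total orderings). Thus the real task is: \emph{if $g$ is $\sigma(\mS)$-positive, then $g$ is $<_{\{E,F\}}$-positive}.

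The first reduction is the filtration structure already established in Proposition \ref{prop:convexD}: $G^{(2)} = \langle s_2 \rangle$ is $<_D$-convex, and $\Sigma_{\mS}^{(2)} = \Sigma_{\mS} \cap G^{(2)}$ is generated by $s_2$. On the dynamical side I would check directly from the displayed formula for $s_2$ that $s_2$ fixes both $E = (0;-1111\cdots)$ and $F = (0;+1111\cdots)$: indeed $s_2$ sends $(N;-11\cdots) \mapsto (N;-\cdots)$ and $(N;+\cdots)\mapsto(N;+(m-1)(n-1)\cdots)$, and iterating on the tail shows $E, F$ are fixed, so that one must pass to a further point of the sequence $I$ to order $\langle s_2\rangle$; the sign of that comparison should match ``$s_2 >_D 1$''. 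This handles all $g$ that are $2(\mS)$-positive. So the core case is $g$ that is $1(\mS)$-positive: $g$ contains $s_1$ but no $s_1^{-1}$, only $s_1$ and $s_2^{\pm 1}$.

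For a $1(\mS)$-positive $g$, I would track the orbit of $E$ under a $1(\mS)$-positive word $w$ representing $g$, read right to left. The key observation I expect to extract from the transition formulas is a ``trapping'' phenomenon: once one applies $s_1$ (whose formulas all produce a sequence beginning $+11\cdots$ or increase the leading block, and in particular never return to the $-$ side except by the rare carry $(m-1)(n-1)(m-1)\mapsto (N+1);-$), the image of $E$ is pushed strictly to the right of $E$, and subsequent letters $s_2^{\pm1}$ — which shuffle within the relevant intervals — cannot bring it back below $E$. Concretely: $s_1(E) = s_1(0;-1111\cdots) = (0;+111\cdots) >_\R E$, and I must show that for any word $V$ on $s_2^{\pm1}$ and any further $s_1$'s, starting from a point of the form $(0;+\cdots)$ or higher one stays $>_\R E$, and moreover the first coordinate where the comparison with $E$ is decided is governed by the leftmost ``$+$-block'' created, so $g(E) >_\R E$ strictly. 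The convexity of $G^{(2)}$ under $<_{\{E,F\}}$ (equivalently, that $E,F$ are fixed exactly by $\langle s_2\rangle$, which should come out of the stabilizer computation) is what lets me conclude that genuinely leaving $G^{(2)}$ forces a strict inequality at $E$ rather than a tie.

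The main obstacle is the bookkeeping in this trapping argument: the transition formulas for $s_1$ and $s_2^{\pm 1}$ have several cases depending on the leading symbols $\pm$, $(m-1)$, $(n-1)$, and one must verify that no cancellation/carry sequence (the $(m-1)(n-1)$-runs, which encode the relator $a^m$ being central) ever produces $g(E) <_\R E$ or $g(E) = E$ with $g \notin \langle s_2\rangle$. I would organize this by proving a clean invariant — e.g.\ ``if $w$ is $1(\mS)$-positive then $w(E)$ lies in the half-open arc $(E, \, x(E)]$ of $\tE(T)$, strictly above $E$'' — by induction on the length of $w$, peeling letters off the left and using that $s_1, s_2, s_2^{-1}$ all map this arc into itself (this is essentially the dynamical incarnation of Property $A$ and of the $1$-positivity being preserved). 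Granting such an invariant, $\Sigma_{\mS} \subset P_{\{E,F\}}$ follows, and then {\bf LO2} for both cones upgrades this to equality, giving $<_{\{E,F\}} \; = \; <_D$.
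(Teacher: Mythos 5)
Your overall strategy coincides with the paper's: prove that every $\sigma(\mS)$-positive element is $<_{\{E,F\}}$-positive and then use the fact that both relations are total orderings to upgrade the inclusion of positive cones to equality; and your main case, that a $1(\mS)$-positive $g$ satisfies $E <_{\R} g(E)$, is exactly the computation the paper's proof rests on (the paper, like you, leaves the case-by-case verification of the transition formulas to the reader, so I will not fault the unexecuted bookkeeping of your ``trapping'' invariant, though whether $s_{2}^{-1}$ really preserves the arc $(E,x(E)]$ is one of the cases you would have to check).

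There is, however, a genuine error in your treatment of $G^{(2)}=\langle s_{2}\rangle$, which is precisely the case the point $F$ exists to handle. You assert that $s_{2}$ fixes both $E$ and $F$; in fact $s_{2}$ fixes only $E$, while $s_{2}(F)=s_{2}(0;+111\cdots)=(0;+(m-1)(n-1)111\cdots)>_{\R}F$, since $m-1\geq 2$ whenever $m\geq n$ and $(m,n)\neq(2,2)$. This is not a harmless slip: if $s_{2}$ fixed both points, the stabilizer of $\{E,F\}$ would be nontrivial and $<_{\{E,F\}}$ would not even be a well-defined total ordering (the paper defines $<_{\{x_{1},\ldots,x_{k}\}}$ only when that stabilizer is trivial), and your proposed remedy --- passing to ``a further point of the sequence $I$'' --- would prove a statement about some larger ordering $<_{I}$ rather than about $<_{\{E,F\}}$, i.e.\ a different theorem. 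For the same reason your parenthetical claim that ``$E,F$ are fixed exactly by $\langle s_{2}\rangle$'' is false. The correct statements, and the ones the paper actually uses, are: (i) the stabilizer of $E$ alone is exactly $\langle s_{2}\rangle$, which follows from Property $C$ combined with the $1$-positive computation (an element fixing $E$ can be neither $1(\mS)$-positive nor $1(\mS)$-negative, hence lies in $G^{(2)}$); and (ii) $s_{2}^{q}(F)>_{\R}F$ for $q>0$, which is what breaks the tie at $E$ in the direction consistent with $1<_{D}s_{2}$. With the computation of $s_{2}(F)$ corrected and the stabilizer step routed through Property $C$ as in (i), your argument becomes the paper's proof.
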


\begin{proof}
By the formula of the action of $G_{m,n}$ on $\tE(T_{m,n})$ given above, it is easy to see that for $1(\mS)$-positive element $g \in G_{m,n}$, $E <_{\R} g(E)$.
Thus by Property $C$ of $\mS$, $g(E)=E$ if and only if $g=s_{2}^{q}$ for $q \in \Z$. Similarly, $s_{2}^{q}(F) = (0;+(m-1)(n-1)\cdots) > F$ if $q>0$. 
Thus, we conclude $1<_{D} g$ then $1<_{\{E,F\}} g$, hence two orderings are identical.
\end{proof}

We remark that a more direct proof is possible. That is, we can prove that $<_{\{E,F\}}$ is a Dehornoy-like ordering without using Theorem \ref{thm:order}.

In fact, the proof of Theorem \ref{thm:dynamics} provides an alternative (but essentially equivalent since it uses the standard action of $G_{m,n}$) proof of the fact that $\mS$ has Property $A$.
On the other hand, using the description of the standard action given here, we can give an completely different proof of Property $C$, as we give an outline here.
 Let $g \in G$. If $g(E)=E$, then $g=s_{2}^{k}$ for $k \in \Z$. So assume $g(E) <_{\R} E$, so $g(E) = (N;l_{1}l_{2}l_{3}\cdots)$ where $N\leq 0$, $l_{1} \in \{+,-\}$. Let $c(g)$ be the minimal integer such that $l_{j'}=1$ for all $j'>j$.
We define the complexity of $g$ by $C(g)=(|N|,c(g))$. Now we can find $1(\mS)$-positive element $p_{g}$ such that $C(p_{g} g) < C(g)$. The construction of $p_{g}$ is not difficult but requires complex case-by-case studies, so we omit the details.  
Here we compare the complexity by the lexicographical ordering of $\Z \times \Z$. Since $C(g)=(0,0)$ implies $g(E)=E$, by induction of the complexity we prove $g$ is $\sigma(\mS)$-negative.

\begin{rem}
Regard $G_{3,2}=B_{3}$ as the mapping class group of $3$-punctured disc $D_{3}$ having a hyperbolic metric and let $\widetilde{D_{3}}\subset \mathbb{H}^{2}$ be the universal cover of $D_{3}$.
By considering the action on the set of points at infinity of $\widetilde{D_{3}}$, we obtain an action of $G_{3,2}$ on $\R$ which we call the {\it Nielsen-Thurston action}. 
The Dehornoy ordering $<_{D}$ of $B_{3}=G_{3,2}$ is defined by the Nielsen-Thurston action. See \cite{sw}.
In the case $(m,n)=(3,2)$, the standard action $\Theta$ derived from Bass-Serre tree is conjugate to the Nielsen-Thurston action. Thus, the Dehornoy-like ordering of $G_{m,n}$ is also regarded as a generalization of the Dehornoy ordering of $B_{3}$, from the dynamical point of view.
\end{rem}

We say a Dehornoy-like ordering $<_{D}$ defined by an ordered finite generating set $\mS$ has {\em Property S} (the {\em Subword property}) if $g <_{D} wg$ holds for all $\mS$-word positive element $w$ and for all $g \in G$. The Dehornoy ordering of the braid group $B_{n}$ has Property $S$ (\cite{ddrw}). 
  
One remarkable fact is that our Dehornoy-like ordering $<_{D}$ of $G_{m,n}$ does not have Property $S$ except the braid group case.

\begin{thm}
\label{thm:propertyS}
The Dehornoy-like ordering $<_{D}$ of $G_{m,n}$ does not have Property $S$ unless $(m,n)=(3,2)$.
\end{thm}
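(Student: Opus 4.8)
The plan is to refute Property $S$ by producing, for every $(m,n)\neq (3,2)$, a single $\mS$-word positive element $w$ and a single element $g\in G_{m,n}$ with $wg<_{D}g$ (whereas Property $S$ would force $g<_{D}wg$). I would take $w=s_{1}$ and $g=s_{2}s_{1}$, so that the whole point becomes the strict inequality $s_{1}s_{2}s_{1}<_{D}s_{2}s_{1}$.

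By Theorem \ref{thm:dynamics} the ordering $<_{D}$ coincides with $<_{\{E,F\}}$ attached to the standard action of $G_{m,n}$ on $\R$, and since that action is order-preserving it suffices to check $s_{1}s_{2}s_{1}(E)<_{\R}s_{2}s_{1}(E)$. Using the explicit formulas for the action on $\tE(T)$: first $s_{1}(E)=F=(0;+1111\cdots)$, hence $s_{2}s_{1}(E)=s_{2}(F)=(0;+(m-1)(n-1)1111\cdots)$; then applying $s_{1}$ to this, the relevant branch is $(N;+(m-1)(n-1)i\cdots)\mapsto (N;+(i+1)\cdots)$, which applies because the third symbol is $1\neq m-1$ as soon as $m\geq 3$, giving $s_{1}s_{2}s_{1}(E)=(0;+2\,1111\cdots)$. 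Comparing $(0;+2\,1111\cdots)$ with $(0;+(m-1)(n-1)1111\cdots)$ lexicographically: if $m\geq 4$ then $2<m-1$ already at the first symbol; if $m=3$ — in which case $(m,n)\neq (3,2)$ forces $n=3$ — then $2=m-1$ but $1<2=n-1$ at the second symbol. Either way $s_{1}s_{2}s_{1}(E)<_{\R}s_{2}s_{1}(E)$, so $s_{1}s_{2}s_{1}<_{D}s_{2}s_{1}$ and Property $S$ fails.

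The only delicate point is exactly the place where the hypothesis enters: for $(m,n)=(3,2)$ one has $m-1=2$, $n-1=1$, so the same computation gives $s_{1}s_{2}s_{1}(E)=(0;+2\,1111\cdots)=s_{2}s_{1}(E)$ and the $E$-comparison is a tie — consistently with the classical fact that the Dehornoy ordering of $B_{3}=G_{3,2}$ does have Property $S$ (there $s_{1}s_{2}s_{1}=s_{2}s_{1}s_{2}=(s_{2}s_{1})s_{2}>_{D}s_{2}s_{1}$). So the statement must exclude $(3,2)$, and beyond correctly reading the right branch of the case-split action formulas and checking the two subcases $m\geq 4$ and $(m,n)=(3,3)$, I expect no real obstacle. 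If one wished to avoid the dynamical input altogether, an alternative would be to observe that for $m\geq 4$ the branch $(N;+i\cdots)\mapsto (N;+11(i+1)\cdots)$ shows $s_{1}$ strictly decreases every end of $\tE(T)$ starting with $+i$ for $2\leq i\leq m-2$, while the $G_{m,n}$-orbit of $E$ is dense in $\tE(T)$ and hence meets such a cylinder; but the explicit witness $w=s_{1}$, $g=s_{2}s_{1}$ is cleaner and uniform.
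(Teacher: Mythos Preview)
Your proof is correct and follows essentially the same approach as the paper: both use the dynamical description of $<_{D}$ from Theorem~\ref{thm:dynamics} and exhibit the very same counterexample $w=s_{1}$, $g=s_{2}s_{1}$, computing $s_{2}s_{1}(E)=(0;+(m-1)(n-1)1\cdots)$ and $s_{1}s_{2}s_{1}(E)=(0;+2\,1\cdots)$ and comparing lexicographically. Your case split ($m\geq 4$ versus $m=3$, $n=3$) is in fact a bit more explicit than the paper's statement of the hypothesis, but the argument is the same.
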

\begin{proof}
We use the dynamical description of $<_{D}$ given in Theorem \ref{thm:dynamics}.
If $m>2$ and $n\neq 2$, then 
\[ [s_{1}(s_{2}s_{1})] E = (0; +2 11\cdots ) <_{\R} (0;+(m-1)(n-1)1\cdots) = [s_{2}s_{1}]E \]
hence $s_{1}(s_{2}s_{1}) <_{D} (s_{2}s_{1})$.
\end{proof}

However we show that the Dehornoy-like ordering $<_{D}$ of $G_{m,n}$ has a slightly weaker property which can be regarded as a partial subword property.

\begin{thm}
\label{thm:weakS}
Let $<_{D}$ be the Dehornoy-like ordering of $G_{m,n}$.
Then $g <_{D} s_{2}g$ holds for all $g \in G$. 
\end{thm}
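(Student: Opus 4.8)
The plan is to prove $g <_D s_2 g$ for every $g \in G_{m,n}$ by using the dynamical description of $<_D$ furnished by Theorem \ref{thm:dynamics}, namely that $<_D$ coincides with $<_{\{E,F\}}$ for $E=(0;-1111\cdots)$ and $F=(0;+1111\cdots)$ under the standard action $\Theta$. So the inequality $g <_D s_2 g$ is equivalent to: either $g(E) <_{\R} s_2 g(E)$, or $g(E) = s_2 g(E)$ and $g(F) <_{\R} s_2 g(F)$. Since $s_2$ has no fixed points in $\tE(T)$ other than those of the form fixed by $s_2^q$ (and indeed the fixed set of $s_2$ on $\tE(T)$ is the orbit of $E$ — this is exactly what was used in the proof of Theorem \ref{thm:dynamics}), the first alternative will almost always hold, and the point $F$ only matters on the exceptional fibre. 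So the heart of the matter is to show that $z <_{\R} s_2(z)$ for every $z \in \tE(T)$ with $z \neq s_2(z)$, and $z <_{\R} s_2(z)$ is automatic once we check $s_2$ moves every non-fixed point strictly to the right.

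The key computational step is therefore to read off from the explicit formula for the action of $s_2$ on $\tE(T)$ (given in the excerpt) that $s_2$ is an orientation-preserving homeomorphism of $\R$ with $s_2(z) \geq z$ for all $z$, with equality exactly on the lift of the fixed set. Concretely: a point of $\tE(T)$ is a sequence $(N;\pm l_1 l_2 \cdots)$ ordered lexicographically, and the four cases of the $s_2$-formula are: $(N;+\cdots) \mapsto (N;+(m-1)(n-1)\cdots)$, which increases the word since $(m-1)(n-1)\cdots \geq 1\cdots$ with equality only in a degenerate case; $(N;-i\cdots)\mapsto (N;+(m-1)(i-1)\cdots)$ with $i \neq 1$, which flips $-$ to $+$ hence strictly increases; $(N;-1i\cdots)\mapsto(N;+(i-1)\cdots)$ with $i\neq 1$, again flipping $-$ to $+$; and $(N;-11\cdots)\mapsto (N;-\cdots)$, i.e. $(N;-11 l_3 l_4\cdots) \mapsto (N; - l_3 l_4 \cdots)$, which strips two leading $1$'s — and since $l_3 \geq 1$ this is $\geq$ the original in lex order on the tail, with equality precisely when the tail is all $1$'s, i.e. on the $s_2$-fixed points. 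In every case $N$ is unchanged, so the comparison is purely on the label part, and in each case $s_2(z) \geq z$ with the equality set being exactly $\{s_2^q E\}$.

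So the proof structure I would write is: (1) invoke Theorem \ref{thm:dynamics} to reduce to comparing $g(E)$ with $s_2 g(E)$, and on ties, $g(F)$ with $s_2 g(F)$; (2) set $z = g(E)$ and verify, by the four-case analysis above, that $z <_{\R} s_2(z)$ unless $z$ lies in the $s_2$-fixed set; (3) if $z$ is $s_2$-fixed then $g(E)$ is $s_2$-fixed, so by the argument in the proof of Theorem \ref{thm:dynamics} we have $g \in \langle s_2 \rangle \cdot \mathrm{Stab}$, more precisely $g(E) = s_2^q(E)$ forces $g = s_2^k$ for some $k$ (the stabilizer of $E$ is exactly $\langle s_2 \rangle$), in which case $s_2 g = s_2^{k+1}$ and then one checks directly on $F$ that $s_2^{k+1}(F) >_{\R} s_2^k(F)$, using $s_2^q(F) = (0;+(m-1)(n-1)\cdots)$ and monotonicity of $s_2$ on the relevant interval, exactly as recorded in the proof of Theorem \ref{thm:dynamics}.

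The main obstacle is the fourth case of the $s_2$-formula, $(N;-11\cdots)\mapsto(N;-\cdots)$: this is the only case where $s_2$ can fix a point, and one must be careful that it is \emph{non-strictly} increasing there and isolate exactly when equality occurs. This requires understanding which tails are all-$1$'s and confirming these are precisely the lifts of the $s_2$-fixed end, which is where the identification of $\mathrm{Stab}(E)$ with $\langle s_2\rangle$ (already used for Theorem \ref{thm:dynamics}) is essential; everything else is a direct case check on the lexicographic order that I would not grind through in detail. A secondary, purely bookkeeping point is to make sure the degenerate sub-case in case one ($(N;+\cdots)\mapsto(N;+(m-1)(n-1)\cdots)$ when $m=2$, so $(m-1)=1$) still gives strict increase because of the $(n-1)$ factor, using $(m,n)\neq(2,2)$; this is consistent with the remark in the excerpt that $G_{2,2}$ is genuinely exceptional.
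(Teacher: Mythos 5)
Your argument is essentially the paper's: the paper proves Theorem \ref{thm:weakS} exactly by observing that the standard action of $s_{2}$ on $\tE(T)$ is (non-strictly) monotone increasing, so $g(E)\leq_{\R}s_{2}g(E)$, with equality only when $g(E)$ is $s_{2}$-fixed, in which case $g$ is a power of $s_{2}$ and strictness is checked directly. Your four-case lexicographic verification and the stabilizer-of-$E$ step are an expanded version of that same argument; the only slip is that the $s_{2}$-fixed set in $\tE(T)$ is not just $\{s_{2}^{q}E\}=\{E\}$ but all lifts $(N;-111\cdots)$, $N\in\Z$, together with the other end $(N;+(m-1)(n-1)(m-1)(n-1)\cdots)$ of the axis of $s_{2}$ (which, however, never lies in the $G$-orbit of $E$), an imprecision the paper's own proof shares and which does not affect the conclusion, since for $g(E)=(N;-111\cdots)$ one has $g=x^{mN}s_{2}^{k}$ and the comparison at $F$ (or the fact that such $g$ commutes with $s_{2}$) still gives $g<_{D}s_{2}g$.
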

\begin{proof}
Observe that the standard action of $s_{2}$ on $\tE(T)$ is monotone increasing. That is, for any $p \in \tE(T)$, we have $p  \leq _{\R} s_{2}(p)$. Thus, $g(E)  \leq s_{2}g(E)$. The equality holds only if $g(E) = E$, so in this case $g=s_{2}^{k}$ $(k \in \Z)$. So in this case we also have a strict inequality $g <_{D} s_{2}g$.
\end{proof}

\begin{rem}
A direct proof of Theorem \ref{thm:propertyS} which does not use the dynamics is easy once we found a counter example.
If $(m,n)\neq (3,2)$, then $s_{2}s_{1}s_{2}= s_{1}s_{2}s_{1}W$ for an $\mS$-positive word $W$, so
\begin{eqnarray*}
 s_{1}^{-1}s_{2}^{-1}s_{1}^{-1}s_{2}s_{1}
& = & s_{1}^{-1}s_{2}^{-1}s_{1}^{-1}(s_{2}s_{1}s_{2})s_{2}^{-1} \\
& = & s_{1}^{-1}s_{2}^{-1}s_{1}^{-1}(s_{1}s_{2}s_{1}W)s_{2}^{-1} \\
& = & W s_{2}^{-1}
\end{eqnarray*}
The last word is $1(\mS)$-positive hence $s_{1}(s_{2}s_{1}) <_{D} (s_{2}s_{1})$.

Using dynamics we can easily find a lot of other counter examples. The main point is that, as we can easily see, the action $s_{1}$ is not monotone increasing unlike the action of $s_{2}$. That is, there are many points $p \in \tE(T)$ such that $s_{1}(p) <_{\R} p$.
\end{rem}

\begin{rem}
Another good property of the standard generator $\mS= \{\sigma_{1},\sigma_{2}\}$ of $B_{3}$ is that the $\mS$ word-positive monoid $P_{\mS}=B_{3}^{+}$ is a {\it Garside monoid}, so $B_{3}^{+}$ has various nice lattice-theoretical properties. See \cite{bgm},\cite{d2} for a definition and basic properties of the Garside monoid and Garside groups.
In particular, the monoid $B_{3}^{+}$ is atomic.
That is, if we define the partial ordering $\prec$ on $B_{3}^{+}$ by $g \prec g'$ if $g^{-1}g' \in B_{3}^{+}$, then for every $g \in G$, the length of a strict chain 
\[ 1 \prec g_{1} \prec \cdots \prec g_{k }= g\]
is finite.  
However, if $m>3$, then the $\mS$-word positive monoid $P_{\mS}$ is not atomic.
If $m>3$, then $s_{2}s_{1}s_{2}= s_{1}s_{2}s_{1}s_{2} W$ holds for $W \in P_{\mS}$ so we have a chain 
\[  \cdots \prec s_{1}^{2}s_{2}s_{1}s_{2}  \prec s_{1} s_{1}s_{2}s_{1}s_{2} W  = s_{1}s_{2}s_{1}s_{2} \prec s_{1}s_{2}s_{1}s_{2} W = s_{2}s_{1}s_{2} \]
having infinite length.
Thus for $m>3$, $P_{\mS}$ is not a Garside monoid.

On the other hand, the groups $G_{m,n}$ have a lot of Garside group structures. For example, let $\mX=\{x,y\}$ be a generating set of $G_{m,n}$. Then the $\mX$-word positive monoid $P_{\mX}$ is a Garside monoid. Moreover, if $m$ and $n$ are coprime, that is, if $G_{m,n}$ is a torus knot group, then there are other Garside group structures due to Picantin \cite{p}.
Thus, unlike the Dehornoy ordering of $B_{n}$, a relationship between general Dehornoy-like orderings and Garside structures of groups seem to be very weak. Thus, it is interesting problem to find other family of left-orderings which is more related to Garside group structure.

In the remaining case $(m,n)=(3,3)$, the author could not determine whether $P_{\mS}$ is a Garside monoid or not. 
\end{rem}

\subsection{Exotic orderings: left orderings with no non-trivial proper convex subgroups}

In \cite{c}, Clay constructed left orderings of free groups which has no non-trivial proper convex subgroups by using the Dehornoy ordering of $B_{3}$. Such an ordering is interesting, because many known constructions of left orderings, such as a method to use group extensions, produce an ordering having proper non-trivial convex subgroups. 

In this section we construct such orderings by using a Dehornoy-like ordering of $G_{m,n}$. By using the dynamics, we prove a stronger result even for the 3-strand braid group case.  
Let $H$ be a normal subgroup of $G_{m,n}$. By abuse of notation, we also use $<_{D}$ to represent both the Dehornoy like ordering of $G_{m,n}$ and its restriction to $H$. 

First we observe the following lemma, where the partial subword property plays an important role.
\begin{lem}
\label{lem:conj}
Let $C$ be a non-trivial $<_{D}$-convex subgroup of $H$. 
If $G^{(2)}\cap H =\{1\}$, then $s_{2}^{k}c s_{2}^{-k} \in C$ for all $k \in \Z$ and $c \in C$.
\end{lem}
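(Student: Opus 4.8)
The plan is to exploit the partial subword property (Theorem \ref{thm:weakS}), which says $g <_D s_2 g$ for every $g \in G$, together with the convexity of $C$ in $H$. Fix $c \in C$ with $c \neq 1$; by symmetry (replacing $c$ with $c^{-1}$) we may assume $1 <_D c$. It suffices to treat a single $k$, say $k = 1$, since the general case follows by iteration (and $k < 0$ follows by applying the $k = 1$ case to the convex subgroup, noting $s_2^{-1} c s_2$ is handled symmetrically). So the goal is to show $s_2 c s_2^{-1} \in C$.

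First I would bound $s_2 c s_2^{-1}$ from below and above by elements of $H$ that are forced to lie in $C$ by convexity. For the lower bound: since $1 <_D c$, Theorem \ref{thm:weakS} applied to $c s_2^{-1}$ gives $c s_2^{-1} <_D s_2 c s_2^{-1}$; but also $c s_2^{-1}$ compared with $1$ need not be positive, so instead I would argue more carefully. The cleaner route: from $1 <_D c$ and left-invariance, $s_2^{-1} <_D c s_2^{-1}$ is not immediately what I want either. The key observation is that $s_2$ is $<_D$-positive and moreover, by Theorem \ref{thm:weakS}, the map $x \mapsto s_2 x$ is strictly increasing \emph{and} dominates the identity, i.e. $x <_D s_2 x$ always. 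Combining: $1 <_D s_2 <_D s_2 c$ (the second inequality from $1 <_D c$ by left multiplication by $s_2$), and similarly $s_2 c s_2^{-1}$ versus $s_2 c$: applying $x \mapsto s_2 x$-type reasoning, or rather comparing $s_2 c s_2^{-1}$ with $s_2 c$ via right multiplication — but right multiplication is not order-preserving. Instead I compare $s_2 c s_2^{-1}$ with $c$ directly: we have $c <_D s_2 c$ (Theorem \ref{thm:weakS} with $g = c$), and we have $s_2 c s_2^{-1} <_D s_2 c$ iff $c s_2^{-1} <_D c$ iff $s_2^{-1} <_D 1$, which holds since $s_2 >_D 1$. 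So $s_2 c s_2^{-1} <_D s_2 c$. For the lower bound, $1 <_D s_2 c s_2^{-1}$ iff $s_2^{-1} <_D c s_2^{-1} \cdot s_2^{-1}$... let me instead use $s_2^{-1} <_D c s_2^{-1}$: this holds iff $1 <_D c$, true; then left-multiplying by $s_2$: $1 <_D s_2 c s_2^{-1}$.

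So we have sandwiched $1 <_D s_2 c s_2^{-1} <_D s_2 c$ with both bounds in $H$ provided $s_2 c \in H$ and $1 \in H$; the latter is trivial, and $s_2 c \in H$ fails in general since $s_2 \notin H$. To fix this I would instead produce bounds genuinely inside $H$: note $s_2 c s_2^{-1} \in H$ since $H$ is normal. I want $h_1 <_D s_2 c s_2^{-1} <_D h_2$ with $h_1, h_2 \in C$. Take $h_1 = 1 \in C$ and for the upper bound I would use iterates of $c$: the point of the hypothesis $G^{(2)} \cap H = \{1\}$ is that $s_2^k \notin H$ for $k \neq 0$, so $H$ "sees" conjugation by $s_2$ only through the way it shuffles $H$. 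The real argument: for any $c' \in C$ with $c' >_D 1$, we have $c' <_D s_2 c'$, and $s_2 c' \in C$ would follow from convexity if $s_2 c'$ were trapped below some element of $C$ — but $s_2$ only adds a bounded amount in the $G^{(2)}$-direction and $G^{(2)} \cap H$ is trivial, so in fact $s_2^N c$ for large $N$ can be compared with powers $c^M$. The hard part will be making precise that conjugation by $s_2$ does not escape the convex hull: I expect one shows $s_2 c s_2^{-1} <_D c^M$ for some $M$ (or $<_D$ some element of $C$) using that the $G^{(2)}$-component contributed by the conjugation is absorbed because $G^{(2)} \cap H = \{1\}$ forces cofinality of $C$ in the relevant direction. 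That cofinality/absorption step — translating "$G^{(2)}\cap H = \{1\}$" into "conjugation by $s_2$ preserves the $<_D$-convex hull of any element of $H$" — is the main obstacle; everything else is bookkeeping with left-invariance and Theorem \ref{thm:weakS}.
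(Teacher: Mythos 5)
Your proposal stalls exactly at the point where the lemma has content: you never produce an upper bound inside $H$ for the conjugated element, and you explicitly defer the "absorption/cofinality" step that would use the hypothesis $G^{(2)}\cap H=\{1\}$. That hypothesis enters the paper's proof in a very concrete way that your sketch never exploits: since $H$ meets $G^{(2)}=\langle s_{2}\rangle$ trivially, every non-trivial $<_{D}$-positive element $c$ of $H$ must be $1(\mS)$-positive, and $1(\mS)$-positivity (resp.\ negativity) is preserved under conjugation by $s_{2}^{k}$. The paper then sandwiches not the conjugate but the commutator $d=c\,s_{2}^{k}c^{-1}s_{2}^{-k}$, which lies in $H$ by normality: $d<_{D}c$ because $s_{2}^{k}c^{-1}s_{2}^{-k}$ is $1(\mS)$-negative and one multiplies on the left by $c$; and $d\geq_{D}1$ via the dynamical description, since $s_{2}^{-k}$ fixes the point $E$ and Theorem \ref{thm:weakS} (monotonicity of $s_{2}$) gives $c\,s_{2}^{k}c^{-1}(E)\geq_{\R}E$. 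Convexity of $C$ applied to $1\leq_{D}d<_{D}c$ puts $d\in C$, and then $s_{2}^{k}cs_{2}^{-k}=d^{-1}c$ is a product of two elements of $C$, with no further convexity argument needed. Your alternative hope, that $s_{2}cs_{2}^{-1}<_{D}c^{M}$ for some $M$, is an Archimedean-type bound that nothing in the setup guarantees, so as written the proposal is missing the key idea rather than just the bookkeeping.

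Two smaller problems: the justification "$s_{2}^{-1}<_{D}cs_{2}^{-1}$ iff $1<_{D}c$" is right-invariance, which a left ordering does not have (the conclusion $1<_{D}s_{2}cs_{2}^{-1}$ is true here, but only because $c$ is $1(\mS)$-positive, or by the dynamics $s_{2}cs_{2}^{-1}(E)=s_{2}c(E)\geq_{\R}E$); and the reduction of negative $k$ to the case $k=1$ "by symmetry" is not automatic, since $s_{2}Cs_{2}^{-1}\subset C$ does not formally imply $s_{2}^{-1}Cs_{2}\subset C$. The paper's argument treats all $k\in\Z$ simultaneously, precisely because conjugation by any power of $s_{2}$ preserves the $s_{1}$-content of a word and $s_{2}^{\pm k}$ fixes $E$.
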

\begin{proof}
Let $c \in H$ be a $<_{D}$-positive element. Since $G^{(2)}\cap H =\{1\}$, $c$ must be $1(\mS)$-positive.
So $s_{2}^{k} c^{-1} s_{2}^{-k}$ is $1(\mS)$-negative, hence $c s_{2}^{k} c^{-1} s_{2}^{-k} <_{D} c$.
On the other hand, by Theorem \ref{thm:weakS}, $c s_{2}^{k} c^{-1} >_{D} 1$. Thus $c s_{2}^{k} c^{-1}(E) = c s_{2}^{k} c^{-1}s_{2}^{-k}(E) \geq_{\R} E$, so $ cs_{2}^{k}c^{-1}s_{2}^{-k} \geq_{D} 1$.  
$H$ is a normal subgroup, so $c s_{2}^{k}c^{-1}s_{2}^{-k} \in H$. 
Since $C$ is $<_{D}$-convex subgroup, $cs_{2}^{k} c^{-1} s_{2}^{-k} \in C$. Hence we conclude $s_{2}^{k}cs_{2}^{-k} \in C$. 
\end{proof}

Now we show that in most cases, the restriction of the Dehornoy-like ordering to a normal subgroup of $G_{m,n}$ yields a left-ordering having no non-trivial proper convex subgroups.

\begin{thm}
\label{thm:exotic}
Let $H$ be a normal subgroup of $G_{m,n}$ such that $G^{(2)}\cap H =\{1\}$.
Then the restriction of the Dehornoy-like ordering $<_{D} $ to $H$ contains no non-trivial proper convex subgroup.
\end{thm}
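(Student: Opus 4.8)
The plan is to argue by contradiction: suppose $C$ is a non-trivial proper $<_{D}$-convex subgroup of $H$, and derive a contradiction from the partial subword property (Theorem \ref{thm:weakS}) together with Lemma \ref{lem:conj}. First I would record the structural consequences of the hypothesis $G^{(2)}\cap H = \{1\}$: every non-trivial element of $H$ is genuinely $1(\mS)$-positive or $1(\mS)$-negative, so in particular $H$ embeds in the "mod $G^{(2)}$" quotient data, and no non-trivial power of $s_2$ lies in $H$. This is exactly the setting in which Lemma \ref{lem:conj} applies, giving that $C$ is invariant under conjugation by all powers of $s_2$: $s_2^k C s_2^{-k}=C$ for all $k\in\Z$.

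Next I would exploit this $s_2$-conjugation invariance dynamically. Pick a $<_{D}$-positive $c\in C$; it is $1(\mS)$-positive, so by the formulas for the standard action (Section 3.2) the point $c(E)$ lies strictly to the right of $E$ inside the "block at level $N$" for some controlled $N$, while conjugating by $s_2^k$ for large $k$ pushes the relevant configuration around. The key point is that $\{\,s_2^k c s_2^{-k}\,\}_{k\in\Z}$, all lying in $C$, cannot be $<_{D}$-bounded above by a single element of $C$ unless $C$ already fails to be proper: convexity of $C$ would force every element of $G$ that is $<_{D}$-between $1$ and $\sup$ of this family to lie in $C$, and one shows using the explicit action that this family is $<_{D}$-cofinal in the set of $1(\mS)$-positive elements of $G$ — hence $C$ would have to contain, e.g., $s_1$ (after multiplying by an element of $G^{(2)}$), and then $C=G$, contradicting properness. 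Equivalently, and perhaps more cleanly: Proposition \ref{prop:convexD}(3) classifies the $<_{D}$-convex subgroups of $G$ itself as the $G^{(i)}$; I would show that the convex hull in $G$ of a non-trivial $s_2$-conjugation-invariant subgroup of $H$ must be all of $G$, again because $G^{(2)}$ meets $H$ trivially so the hull is not contained in $G^{(2)}$, and the only convex subgroup of $G$ properly containing $G^{(2)}$ is $G$.

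Assembling this: from $C\neq\{1\}$ and $G^{(2)}\cap H=\{1\}$ we get (via Lemma \ref{lem:conj}) that $C$ is $s_2$-invariant under conjugation; from that and the dynamical/cofinality argument we get that the $<_{D}$-convex hull of $C$ inside $G$ is not contained in $G^{(2)}$, hence by Proposition \ref{prop:convexD} equals $G$; then convexity of $C$ in $H$ together with normality of $H$ forces $C$ to contain an element of $H$ that is $<_{D}$-between $1$ and any prescribed positive element, and running this for a $<_{D}$-cofinal sequence in $H$ shows $C=H$. So $C$ is not proper — contradiction.

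The main obstacle I anticipate is the cofinality step: making precise that the $s_2$-conjugates $s_2^k c s_2^{-k}$ (together, if needed, with products and their translates) are $<_{D}$-cofinal in the $1(\mS)$-positive part of $G$, i.e. that no element of $G$ $<_{D}$-dominates all of them. This requires the explicit formulas for the standard action on $\widetilde{E}(T)$ and a careful reading of how conjugation by $s_2$ moves the "first place of disagreement" with $E$; the partial subword property $g<_{D}s_2 g$ is what guarantees the inequalities go the right way (as in the proof of Lemma \ref{lem:conj}, where it yields $cs_2^k c^{-1}\geq_D 1$). Everything else — the classification of convex subgroups of $G$, the triviality of $G^{(2)}\cap H$, the normality of $H$ — is either quoted from earlier results or routine.
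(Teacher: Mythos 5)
Your skeleton---use Lemma \ref{lem:conj} to make $C$ invariant under conjugation by $s_{2}$, show that $C$ contains a $<_{D}$-cofinal family in $H$, then let convexity of $C$ in $H$ force $C=H$---is indeed the paper's strategy, but the step you yourself flag as the ``main obstacle'', cofinality, is the entire content of the proof, and neither of the routes you sketch toward it works. You twice use convexity of $C$ inside $G$: you claim convexity forces every element of $G$ lying $<_{D}$-between $1$ and your family to lie in $C$, and in particular that $C$ would contain $s_{1}$ and hence equal $G$; but $C$ is convex only in $H$ for the restricted ordering, and $s_{1}$ need not lie in $H$ at all (e.g.\ for commutator-type subgroups), so no contradiction with properness of $C$ in $H$ is reached this way. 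The ``convex hull in $G$'' variant has the same defect: the smallest $<_{D}$-convex \emph{subgroup} of $G$ containing $C$ is $G$ by Proposition \ref{prop:convexD}, but for a left ordering this subgroup closure is not the order-theoretic convex hull of the set $C$, so it carries no cofinality information, and your final assertion that it ``forces $C$ to contain an element of $H$ between $1$ and any prescribed positive element'' does not follow.

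Moreover the family $\{s_{2}^{k}cs_{2}^{-k}\}$ by itself is genuinely not $<_{D}$-cofinal: $s_{2}$ fixes $E$ and, by the explicit formulas, never increases the integer coordinate $N$ of a point of $\tE(T)$, so every conjugate sends $E$ to a point at the same level as $c(E)$; any $h\in H$ with $h(E)$ at a higher level (and such $h$ exist) dominates the whole family. This is exactly why the paper does more: using Lemma \ref{lem:conj} together with taking powers, it normalizes a positive $c\in C$ to the form $c=s_{2}s_{1}s_{2}w$ and produces a second element $c'=w's_{2}s_{1}s_{2}\in C$ with $w,w'$ $1(\mS)$-positive, and then verifies from the explicit action that the single product $c'c\in C$ satisfies $c'c(E)>_{\R}(1;-111\cdots)$, whence $(c'c)^{N}(E)>_{\R}(N;-111\cdots)>_{\R}h(E)$ for every $h\in H$ once $N$ is large; convexity of $C$ in $H$ then yields $C=H$. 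Without an argument of this kind (producing elements of $C$ whose action strictly raises the level $N$), your proposal does not close.
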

\begin{proof}
Let $C$ be a non-trivial $<_{D}$-convex subgroup of $H$ and $c \in C$ be $<_{D}$-positive element.
Since $c$ must be $1$-positive, by Lemma $\ref{lem:conj}$, we may assume that
$c = s_{2}s_{1}s_{2}w$ where $w$ is a $1$-positive element, by taking a power of $c$ and conjugate by $s_{2}$ if necessary.
Similarly, we also obtain $c' \in C$ such that $c'=w' s_{2}s_{1}s_{2} $ where $w'$ is a $1$-positive element.

By computing the standard action of $c'c$, then we obtain
\begin{eqnarray*}
c'c(E) = w' s_{2}s_{1}s_{2}^{2}s_{1}s_{2}w (E) & >_{\R } & w' s_{2}s_{1}s_{2}^{2}s_{1}s_{2} (E)  =  w' (1; -11(n-1)11\cdots ) \\
& >_{\R} & (1; -1111\cdots).
\end{eqnarray*}
Thus, for any $h \in H$,  $(c'c)^{N}(E) >_{\R} (N; -1111\cdots) >_{\R} h(E) >_{\R} E$ holds for sufficiently large $N>0$. Since $C$ is convex and $c'c \in C$, this implies $h \in C$ so we conclude $C=H$.

\end{proof}

We remark that the assumption that $G^{(2)}\cap H =\{1\}$ is necessary, since $H \cap G^{(2)} $ yields a $<_{D}$-convex subgroup of $H$. We also remark that the hypothesis $G^{(2)}\cap H =\{1\}$ implies that $H$ is a free group, since $G_{m,n}=\Z *_{\Z} \Z$.

Theorem \ref{thm:exotic} provides an example of a left-ordering of the free group of {\em infinite} rank which does not have any non-trivial proper convex subgroups. For example, take $F=[B'_{3},B'_{3}]$, where $B'_{3}=[B_{3},B_{3}] \cong F_{2}$ be the commutator subgroup of $B_{3}$, which is isomorphic to the rank $2$ free group.

\end{document}